\let\OLDthebibliography\thebibliography
\renewcommand\thebibliography[1]{
\OLDthebibliography{#1}
\setlength{\parskip}{0pt}
\setlength{\itemsep}{0pt plus 0.3ex}
}
\newtheorem{thm}{Theorem}[section]
\newtheorem{lemma}[thm]{Lemma}
\newtheorem{prop}[thm]{Proposition}
\theoremstyle{definition}
\newtheorem{defn}[thm]{Definition}
\theoremstyle{remark}
\newtheorem{remark}[thm]{Remark}
\numberwithin{equation}{section}
\newcommand*\wrapletters[1]{\wr@pletters#1\@nil}
\def\wr@pletters#1#2\@nil{#1\allowbreak\if&#2&\else\wr@pletters#2\@nil\fi}
\def \bT {\mathbb T}
\def \fm {\mathfrak m}
\def \fM {\mathfrak M}
\def \cP {\mathcal P}
\def \deg {\mathrm{deg}}
\begin{document}
\title[Maximal extension of the Bloom-Maynard bound]{A maximal extension of the Bloom-Maynard bound for sets with no square differences}
\author[Nuno Arala]{Nuno Arala}
\address{Mathematics Institute, Zeeman Building, University of Warwick, Coventry CV4 7AL}
\email{Nuno.Arala-Santos@warwick.ac.uk}
\subjclass[2010]{Primary: 11B30, 11P55.}
\thanks{}
\date{}
\begin{abstract} We show that if $h\in\mathbb{Z}[x]$ is a polynomial of degree $k$ such that the congruence $h(x)\equiv0\pmod{q}$ has a solution for every positive integer $q$, then any subset of $\{1,2,\ldots,N\}$ with no two distinct elements with difference of the form $h(n)$, with $n$ positive integer, has density at most $(\log N)^{-c\log\log\log N}$, for some constant $c$ that depends only on $k$. This improves on the best bound in the literature, due to Rice, and generalizes a recent result of Bloom and Maynard.
\end{abstract}
\maketitle

\tableofcontents

\section{Introduction}
\label{intro}

The following question was asked by Lovász: if $A\subseteq\mathbb{N}$ is a set such that no two different elements of $A$ differ by a square, must $A$ have upper density $0$? Here, as is customary, we define the upper density of a subset $A\subseteq\mathbb{N}$ as
$$\limsup_{N\to\infty}\frac{|A\cap\{1,\ldots,N\}|}{N}\text{.}$$
This question was settled by Furstenberg \cite{Furstenberg} and Sárközy \cite{Sarkozy}, who proved independently that the answer is affirmative; the former used ergodic theory, while the latter employed a Fourier-analytic approach, based on the Hardy-Littlewood circle method. In recent years there has been interest in strengthening this result by obtaining better bounds for $|A|$, where $A\subseteq\{1,\ldots,N\}$ is a set with no square differences. For example, Sárközy's original argument shows directly that, for such $A$,
\begin{equation}
\label{origbound}
\frac{|A|}{N}\ll\frac{(\log\log N)^{2/3}}{(\log N)^{1/3}}\text{.}
\end{equation}
This has since been much improved. For example, Pintz, Steiger and Szemerédi \cite{PSS} showed that, under the same assumptions,
\begin{equation}
\label{pssbound}\frac{|A|}{N}\ll(\log N)^{-c\log\log\log\log N}
\end{equation}
for some absolute constant $c$. More recently, Bloom and Maynard \cite{BM} devised a powerful bound for the additive energy of a set of rational numbers with small denominators, and applied it to improve the above bound to
\begin{equation}
\label{bmbound}\frac{|A|}{N}\ll(\log N)^{-c\log\log\log N}
\end{equation}
for some absolute constant $c$. This is the best bound up to date.

It is only natural to wonder what happens if one changes the set of forbidden differences, say by replacing it with the set $h(\mathbb{N})$ for some polynomial $h\in\mathbb{Z}[x]$. Lovász's problem concerns the case when $h(x)=x^2$. It is quickly seen that the answer to Lovász's question is no longer affirmative in general. For example, say we take $h(x)=x^2+1$. One may then consider the subset $3\mathbb{N}$ of $\mathbb{N}$, which has upper density $1/3>0$, while on the other hand the difference between any two of its elements is divisible by $3$ and hence not of the form $x^2+1$.

The failure of the above polynomial $h$ to yield an affirmative answer to Lovász's generalized question is, as seen above, due to the existence of an integer $q$ (namely, $3$) such that $q$ does not divide $h(n)$ for any integer $n$. Indeed, if that is the case, then $q\mathbb{N}$ is a subset of $\mathbb{N}$ with positive upper density such that no two of its elements differ by an element of $h(\mathbb{N})$. Polynomials for which no such $q$ exists are called \emph{intersective polynomials} in the literature. We formalize the definition below.

\begin{defn}[Intersective polynomial]
A polynomial $h\in\mathbb{Z}[x]$ is called \emph{intersective} if for any positive integer $q$ the congruence
$$h(n)\equiv0\pmod{q}$$
has an integer solution.
\end{defn}

From the remarks above, it is clear that, in order for there to be any hope of extending the Furstenberg-Sarközy's theorem, or any of its direct descendants, to the polynomial $h$, one must impose that $h$ is intersective. This turns out to be the only restriction needed to ensure an affirmative answer to Lovász's question, as shown by Kamae and Mendès France \cite{KMF}. A result of the quality of \eqref{origbound} has been obtained, for general intersective $h$, by Lucier \cite{Lucier}, with improvements by Lyall and Magyar \cite{LM} and Rice \cite{Rice2}. Balog, Pelikan, Pintz and Szemerédi \cite{BPPE} proved the corresponding version of \eqref{pssbound} for the polynomial $h(x)=x^k$. Recent work of Rice generalizes this, proving a result of the quality of \eqref{pssbound} for a general intersective $h$.

The goal of the present work is to generalize \eqref{bmbound} for arbitrary intersective polynomials. More precisely, we will prove the following:

\begin{thm}
\label{main}
Let $k$ be a positive integer. Then there exists a constant $c=c(k)$ such that, for any intersective polynomial $h\in\mathbb{Z}[x]$ of degree $k$, the following holds: for any positive integer $N$ and any subset $A\subseteq\{1,\ldots,N\}$ such that the equation $a-b=h(n)$ does not have any solutions with $a,b\in A$, $a\neq b$ and $n\in\mathbb{N}$, we have
$$\frac{|A|}{N}\ll_h (\log N)^{-c\log\log\log N}\text{.}$$
\end{thm}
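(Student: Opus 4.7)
The plan is to follow the density increment strategy of Bloom--Maynard~\cite{BM}, adapting it to general intersective polynomials via the intersective sieve introduced by Lucier~\cite{Lucier} and refined by Rice~\cite{Rice2}. At its heart the argument is a Fourier-analytic iteration: if $A\subseteq\{1,\ldots,N\}$ has density $\alpha$ and contains no pair $(a,b)$ with $a-b\in h(\mathbb{N})$, then either $\alpha$ is already below the target $(\log N)^{-c\log\log\log N}$, or one can locate an arithmetic progression $P$ (or low-dimensional Bohr set) on which $A$ has density at least $\alpha(1+\eta)$ for some explicit $\eta=\eta(\alpha)$. Iterating $O(1/\eta)$ times forces $\alpha$ down to the target, provided $N$ is large enough to sustain the iteration.

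The first step is to encode the local structure of $h$. Intersectivity together with Hensel's lemma produces, for each prime $p$, a $p$-adic integer root $z_p\in\mathbb{Z}_p$. Following Rice, one combines these by CRT to restrict $A$ to a residue class modulo an integer $m=m(W)$ supported on primes up to a slowly growing parameter $W$; on this progression the set of available differences is controlled by an auxiliary polynomial $\tilde h(x)=h(z+mx)/m^k\in\mathbb{Z}[x]$, which is still intersective, has the same degree $k$, and whose leading behaviour at the relevant scale mimics $x^k$. This ``$W$-trick'' is what decouples the minor-arc/major-arc analysis from the arithmetic of $h$.

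The Fourier-analytic core then compares the count of $\tilde h$-differences inside $A$ to its expected value on the localized progression. Minor-arc contributions are dispatched by Weyl-type bounds for $\tilde h$ as available from~\cite{Rice2}. The major-arc contribution, after standard manipulations, reduces to an estimate of the form
\begin{equation*}
\sum_{q\le Q}\sum_{\substack{a\pmod{q}\\(a,q)=1}}\bigl|\widehat{1_A}(a/q)\bigr|^{2}\,G(a,q),
\end{equation*}
where $G(a,q)$ is a Gauss-type sum attached to $\tilde h$. The crucial new input is to bound this in terms of the additive energy of the set of fractions $\{a/q\}$ carrying non-negligible weight, and then invoke the Bloom--Maynard bound for the additive energy of rationals with small denominators. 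This is precisely what upgrades the Pintz--Steiger--Szemer\'edi-type bound $(\log N)^{-c\log\log\log\log N}$ of Rice to the desired $(\log N)^{-c\log\log\log N}$.

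The main obstacle will be transporting the Bloom--Maynard input faithfully through the intersective sieve: the local factors of $G(a,q)$ arising from $\tilde h$ must be separated cleanly from the ``set-of-rationals'' input demanded by~\cite{BM}, and the multiplicative weights they introduce must satisfy divisor-like estimates compatible with their machinery. A secondary difficulty is tracking the dimension of the Bohr set produced at each density-increment step, together with the cost of passing from a Bohr set back to an arithmetic progression for the next iteration, so that the overall iteration terminates in $O(\log\log N)$ steps and yields the claimed bound.
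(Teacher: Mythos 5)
Your high-level blueprint (density increment, Lucier's auxiliary polynomials, the Bloom--Maynard additive energy bound for rationals) matches the paper, but the proposal elides the single most important technical obstacle and proposes a detour that the paper does not take.

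The decisive issue you do not identify is the failure of square-root cancellation for complete polynomial exponential sums when $\deg h \geq 3$: one only has $\sum_{s=0}^{q-1} e(a h_d(s)/q) \ll q^{1-1/k}$, and this is sharp. Waving at ``Weyl-type bounds for $\tilde h$'' does not address it, because this deficiency surfaces not on the minor arcs but in the major-arc estimate that feeds the additive energy argument. In the final comparison one obtains, roughly, a factor $Q^{1/2}$ from the lower bound on $\sum |\widehat{\mathbf{1}_A}(\gamma_{a,q})|$ and a factor $Q^{1/2}$ from $E_{2m}^{1/2m}$; these cancel only because the local weight attached to each $a/q$ is $\asymp q^{-1/2}$ (up to small log powers). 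With the honest $q^{-1/k}$ bound the $Q$-powers do not cancel and the iteration collapses. The paper fixes this by importing Rice's device of restricting the summation variable to a sieved set $W(g;Y)$ of residue classes on which $g'$ is nonvanishing modulo small prime powers; on this set the complete sums do enjoy the $q^{1/2}$ bound (Lemma \ref{expsumii}), and Lemma \ref{sievelemma} guarantees the sieved set carries a $\gg (\log Y)^{1-k}$ proportion of the mass. Your phrase about ``separating the local factors of $G(a,q)$ cleanly'' gestures at an obstacle but names neither its content nor its resolution, and this is precisely the place where a naive transplant of \cite{BM} would fail.

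A second, smaller divergence: the paper does not use Bohr sets or a one-shot ``$W$-trick'' restriction to a single progression mod $m$. Instead the density increment passes directly from an $h_d$-free set $A \subseteq [N]$ to an $h_{dq}$-free set $A^\ast \subseteq [N^\ast]$ on a genuine arithmetic progression of modulus $\lambda(q)$, using the inclusion $\lambda(q) h_{dq}(\mathbb{N}) \subseteq h_d(\mathbb{N})$ (Proposition \ref{rabo}). So the auxiliary polynomial is re-derived at every step with $d$ growing multiplicatively, rather than being fixed once at the outset; the price is the technical constraint $d_t \leq N^\rho$ which must be (and is) verified at the end of the iteration. Your worry about Bohr-set dimension growth and the cost of converting Bohr sets back to progressions is therefore a non-issue for this proof, though it would be a legitimate concern for an alternative implementation. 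Also note that the divisor in the auxiliary polynomial is $\lambda(d)$, a completely multiplicative function with $\lambda(p) = p^{m_p}$ depending on the multiplicity of the $p$-adic root, not $d^k$ as you wrote; this matters for the integrality of $h_d$ and for the bound $J_d \ll b_d$ used throughout.
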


There are several obstacles that one encounters when trying to extend the argument from the $h(n)=n^2$ case to the general case. In the Fourier-analytic argument employed by Bloom and Maynard in \cite{BM}, estimates for quadratic Gauß sums play a role, and it is vital for the argument that square root cancellation is available for such sums. More precisely, if $a$ and $q$ are coprime positive integers, one has
$$\sum_{s=0}^{q-1}e\left(\frac{as^2}{q}\right)\ll q^{1/2}\text{.}$$
However, for a general polynomial $h$ of degree $k$, the estimate
$$\sum_{s=0}^{q-1}e\left(\frac{ah(s)}{q}\right)\ll q^{1-1/k}$$
is best possible. To circumvent this, we adapt an argument of Rice, sieving out admissible residue classes $s\pmod{q}$ such that square root cancellation holds for the modified version of the previous sum where we only sum over these admissible residues.

The other obstacle comes from the density increment strategy that Bloom and Maynard employ, where the equality $(qn)^2=q^2n^2$ plays an important role. To get around this, one needs to adapt the density increment argument in such a way that the intersective polynomial $h$ under consideration changes at every step of the density increment strategy. This will, of course, require that the bounds we obtain exhibit some uniformity with respect to $h$.

\section{Notation and conventions}
Let $\mathbb{T}=\mathbb{R}/\mathbb{Z}$ be the circle group, and let $\|\cdot\|$ denote the usual norm in $\mathbb{T}$. For $1\leq a\leq q$ with $\mathrm{gcd}(a,q)=1$, set
$$\fM_{a,q}\left(N,K\right)=\left\{\gamma\in\bT:\left\|\gamma-\frac{a}{q}\right\|\leq\frac{K}{N}\right\}\text{.}$$
For a choice of parameter $Q$, we then define the \emph{major arcs}
$$\fM(N,K,Q)=\bigcup_{\substack{(a,q)=1\\q\leq Q}}\fM_{a,q}\left(N,K\right)$$
and the \emph{minor arcs}
$$\fm(N,K,Q)=\mathbb{T}\setminus\fM_q(N,K,Q)\text{.}$$

Given a polynomial $h\in\mathbb{Z}[x]$, we say that a set $A$ of integers is \emph{$h$-free} if for any $a\neq b$ in $A$ the difference $a-b$ does not lie in $h(\mathbb{N})$. We use the notation $e(x)$ for $e^{2\pi i x}$. A summation sign with an asterisk $\sum^\ast$ denotes a sum over primitive residue classes.

We use the notation $\widehat{f}$ for the Fourier transform of $f$; so if $f:\mathbb{Z}\to\mathbb{C}$ is a compactly supported function, we define its Fourier transform $\widehat{f}:\mathbb{T}\to\mathbb{C}$ by
$$\widehat{f}(\gamma)=\sum_{n\in\mathbb{Z}}f(n)e(n\gamma)\text{.}$$
The convolution of two (compactly supported, say) functions $f,g:\mathbb{Z}\to\mathbb{C}$ is defined by
$$(f\ast g)(n)=\sum_{a+b=n}f(a)g(b)\text{.}$$
We will use the notation $\mathbf{1}_A$ for the indicator function of a set $A\subseteq\mathbb{Z}$, so that
$$\mathbf{1}_A(n)=\begin{cases}1&\text{ if }n\in A\\0&\text{ otherwise.}\end{cases}$$

In view of the result we want to prove (Theorem \ref{main}), for any fixed constant $c$, given a $h$-free subset $A\subseteq\{1,\ldots,N\}$ with density $\sigma$, one may assume that $\sigma\geq(\log N)^{-c\log\log\log N}$, for otherwise the result follows. This will be assumed implicitly throughout the paper. So, for example, the statement of Lemma \ref{2ndc} takes the form ``Then either $\sigma<(\log N)^{-\varepsilon\log\log\log N}$, or (...)'' but in practice, in applications of Lemma \ref{2ndc}, we will always assume automatically that we are in the second case. We may (and will) also assume that $N$ is larger than any fixed constant.

The letters $c$ and $C$ will be used to denote arbitrary positive constants; we make the usual abuse of notation of allowing them to denote different constants at different times. We hope this will not cause any confusion.

\section{The auxiliary polynomials}
\subsection{Construction of the polynomials}
As was mentioned the introduction, the simplicity of the polynomial $x^2$ makes it particularly suitable for an approach to the problem at hand via the density increment strategy that has become pervasive in Arithmetic Combinatorics over the years. If $A\subseteq[N]$ is a set with no square differences and one manages to locate a progression $q[N']+r$ on which $A$ has increased density, then by further splitting this progression into progressions of modulus $q^2$ one finds another progression, say $q^2[N^\ast]+s$, on which $A$ also has increased density. Defining
$$A^\ast=\{x\in[N^\ast]:q^2x+s\in A\}$$
one sees that $A^\ast$ has no square differences, for if $a\neq b$ are in $A^\ast$ and $a-b=n^2$, then $q^2a+s$ and $q^2b+s$ are in $A$ and
$$(q^2a+s)-(q^2b+s)=q^2(a-b)=(qn)^2\text{,}$$
which contradicts the assumption that $A$ has no square differences. Hence we obtain a new set $A^\ast\subseteq[N^\ast]$ with no square differences and increased density.

For a more general intersective polynomial $h$, given a $h$-free set $A\subseteq[N]$, if we locate a progression on which $A$ of modulus $q$ is dense we cannot expect to obtain from it another set $A'\subseteq[N']$ of increased density with no nonzero differences in $h(\mathbb{N})$. What we can do is to use a construction of Lucier \cite{Lucier}, which allows us to obtain a set $A'\subseteq[N']$ of increased density with no nonzero differences in $h_q(\mathbb{N})$ for some polynomial $h_q\in\mathbb{Z}[x]$. We explain this construction below. We remark that the very existence of these auxiliary polynomials is precisely the step in the argument where the intersectivity of $h$ is used.

Fix once and for all an intersective polynomial $h$ of degree $k$, without loss of generality with positive leading coefficient. The condition that $h$ is intersective is equivalent to the statement that $h$ has a zero $z_p\in\mathbb{Z}_p$ for every prime $p$. Let $m_p$ be the multiplicity of $z_p$ as a zero of $h$. For each positive integer $d$ define $r_d$ to be the only integer satisfying $-d<r_d\leq0$ and
$$r_d\equiv z_p\pmod{p^\alpha}\text{ whenever }p\text{ is a prime and }p^\alpha\mid d\text{.}$$
Clearly $r_d$ exists and is unique by the Chinese Remainder Theorem. We now let $\lambda$ be the unique completely multiplicative function that satisfies $\lambda(p)=p^{m_p}$ for every prime $p$. It is clear from the definition that $d\mid\lambda(d)\mid d^k$ for any $d$. We are now ready to define our auxiliary polynomials.
\begin{defn}[Auxiliary polynomials]
For each $d\in\mathbb{N}$, we define
$$h_d(x)=\frac{h(r_d+dx)}{\lambda(d)}\text{.}$$
\end{defn}
It is an easy matter to check from the definitions that $h_d$ has integer coefficients. It is also immediate from the definition that $\lambda(q)h_q(\mathbb{N})\subseteq h(\mathbb{N})$, which explains why these auxiliary polynomials may act as a replacement for the ``$q^2$ times a square is a square'' property alluded to above. In fact, more is true:
\begin{prop}
\label{rabo}
For any positive integers $d,q$,
$$\lambda(q)h_{dq}(\mathbb{N})\subseteq h_d(\mathbb{N})\text{.}$$
\end{prop}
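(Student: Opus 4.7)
The plan is to unfold the definition of $h_{dq}$, use complete multiplicativity of $\lambda$, and match the resulting expression against $h_d(m)$ for a suitable positive integer $m$.

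More concretely, fix $n\in\mathbb{N}$ and compute
$$\lambda(q)h_{dq}(n)=\lambda(q)\cdot\frac{h(r_{dq}+dqn)}{\lambda(dq)}=\frac{h(r_{dq}+dqn)}{\lambda(d)},$$
using that $\lambda(dq)=\lambda(d)\lambda(q)$. So it suffices to find $m\in\mathbb{N}$ with $r_d+dm=r_{dq}+dqn$, i.e.\
$$m=\frac{r_{dq}-r_d}{d}+qn.$$

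The first thing to check is that this $m$ is actually an integer, which reduces to showing $d\mid r_{dq}-r_d$. For any prime power $p^\alpha\mid d$, one also has $p^\alpha\mid dq$, so both $r_d$ and $r_{dq}$ are congruent to $z_p\pmod{p^\alpha}$ by the defining property of the $r_\bullet$'s, and hence congruent to each other. A standard application of the Chinese Remainder Theorem then gives $r_{dq}\equiv r_d\pmod d$. The second thing is to verify $m\ge 1$. The ranges $-d<r_d\le 0$ and $-dq<r_{dq}\le 0$ give
$$-q<\frac{r_{dq}-r_d}{d}<1,$$
so the integer $(r_{dq}-r_d)/d$ is at least $1-q$, whence $m\ge 1-q+qn=1+q(n-1)\ge 1$.

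With $m$ in hand, one has $r_{dq}+dqn=r_d+dm$ by construction, so plugging into the expression above yields
$$\lambda(q)h_{dq}(n)=\frac{h(r_d+dm)}{\lambda(d)}=h_d(m)\in h_d(\mathbb{N}),$$
which is what we wanted. There is no real obstacle here; the only mildly delicate point is the positivity check on $m$, which forces one to be careful that the definition $-d<r_d\le 0$ (rather than, say, $0\le r_d<d$) is compatible with always being able to shift into $\mathbb{N}$ after adding $qn$. The congruence $r_{dq}\equiv r_d\pmod d$ is the conceptual heart of the construction, reflecting the fact that the $p$-adic zero $z_p$ is fixed once and for all independently of the modulus.
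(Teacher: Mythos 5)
Your proof is correct and is essentially the same as the paper's: both use complete multiplicativity of $\lambda$ to reduce to $h(r_{dq}+dqn)/\lambda(d)$, then use $r_{dq}\equiv r_d\pmod d$ (your $(r_{dq}-r_d)/d$ is the paper's $s$) and the range constraint $-q<s\le 0$ to exhibit the argument as $r_d+dm$ with $m\in\mathbb{N}$. You spell out the CRT verification of $r_{dq}\equiv r_d\pmod d$, which the paper dismisses as true ``by construction,'' but the argument is identical.
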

\begin{proof}
\label{inh}
Since by construction we have $r_{dq}\equiv r_d\pmod{d}$, we can write $r_{dq}=r_d+ds$ for some $-q<s\leq0$. For a positive integer $n$, we then have
$$\lambda(q)h_{dq}(n)=\lambda(q)\frac{h(r_{dq}+dqn)}{\lambda(dq)}=\frac{h(r_{dq}+dqn)}{\lambda(d)}=\frac{h(r_d+ds+dqn)}{\lambda(d)}=h_d(s+qn)\text{.}$$
Since $s+qn>0$ (as $s>-q$), the result follows.
\end{proof}
We register here a straightforward but essential analytic property of the auxiliary polynomials.
\begin{prop}
\label{jb}
For any positive integer $d$, denote by $b_d$ and $J_d$ the leading coefficient of $h_d$ and the sum of the absolute values of the coefficients of $h_d$, respectively. Then $J_d\ll_h b_d$.
\end{prop}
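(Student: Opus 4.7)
The plan is to expand $h(r_d + dx)$ via the binomial theorem and bound every coefficient by a uniform multiple of the leading one. Writing $h(y) = \sum_{i=0}^k a_i y^i$, the expansion gives
\[
h(r_d + dx) = \sum_{j=0}^k \Bigl( \sum_{i=j}^k a_i \binom{i}{j} r_d^{i-j} d^j \Bigr) x^j.
\]
The leading coefficient (in $x$) is $a_k d^k$, so after dividing by $\lambda(d)$ we obtain $b_d = a_k d^k / \lambda(d)$, which is an integer since $\lambda(d) \mid d^k$.

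Next I would estimate the remaining coefficients. Using the key fact that $|r_d| < d$ coming from the normalization $-d < r_d \le 0$, the $j$-th coefficient of $h(r_d + dx)$ is bounded in absolute value by
\[
\sum_{i=j}^k |a_i| \binom{i}{j} |r_d|^{i-j} d^j \le \sum_{i=j}^k |a_i| \binom{i}{j} d^i \le C_h \, d^k,
\]
where $C_h$ depends only on $h$. Dividing by $\lambda(d)$, every coefficient of $h_d$ is at most $C_h \, d^k/\lambda(d)$ in absolute value.

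Summing over the $k+1$ coefficients gives $J_d \le (k+1) C_h \, d^k/\lambda(d) = \bigl((k+1)C_h/a_k\bigr) b_d$, which is $\ll_h b_d$ as required. The whole argument is just bookkeeping; there is no real obstacle, the only input used is the bound $|r_d| < d$, which makes each monomial $r_d^{i-j} d^j$ comparable in size to $d^i$, so that every coefficient ends up being dominated by the leading term $d^k$ before we normalize by $\lambda(d)$.
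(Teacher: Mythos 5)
Your proof is correct and follows the same route as the paper: expand $h(r_d+dx)$ by the binomial theorem, read off $b_d = a_k d^k/\lambda(d)$, and use $|r_d|<d$ to bound each coefficient of $h_d$ by $O_h(d^k/\lambda(d))$, from which $J_d \ll_h b_d$ follows. (The paper's statement of the leading coefficient contains the typo $a_d d^k/\lambda(d)$; you correctly wrote $a_k d^k/\lambda(d)$.)
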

\begin{proof}
Write $h(x)=a_kx^k+\cdots+a_0$. A binomial expansion shows that the the coefficient of $x^i$ in $h_d(x)$ is
\begin{equation}
\label{c}
\frac{1}{\lambda(d)}\sum_{j=i}^ka_j\binom{j}{i}r_d^{j-i}d^i\text{.}
\end{equation}
In particular, since $|r_d|<d$, it follows from the triangle inequality that
$$J_d\ll_h\frac{d^k}{\lambda(d)}\text{.}$$
But it also follows from \eqref{c} that $b_d=a_dd^k/\lambda(d)$, and the result follows.
\end{proof}

\subsection{Sieving good residue classes}
As explained in the introduction, a fundamental obstacle one encounters when trying to apply ideas inspired by the approach of Bloom and Maynard in \cite{BM} is the lack of square root cancellation exhibited by exponential sums $\pmod{q}$ over values of a polynomial of degree at least $3$. Rice \cite{Rice} has found an ingenious way to address this problem. Building on the fact that square root cancellation always holds for squarefree moduli (by the Chinese Remainder Theorem and the Weil bound), he then leverages a classical ``$p$-adic stationary phase'' argument and shows that if we sum only over a certain large set of residue classes at which the derivative of the polynomial has appropriate non-vanishing properties, then we do indeed attain square root cancellation. Below we explain Rice's construction of admissible residue classes, which we will use, and on section \S\ref{expsums} we give the relevant square root cancellation bounds for exponential sums.

\begin{defn}
Let $g\in\mathbb{Z}[x]$ and $Y>0$. For each prime $p$, let $\gamma(g;p)$ be the smallest positive integer for which the derivative $g'$ does not vanish identically modulo $p^{\gamma(g;p)}$.
\begin{enumerate}[label=(\roman*)]
\item We define
$$W(g;Y)=\{n\in\mathbb{Z}:g'(n)\not\equiv0\pmod{p^{\gamma(g;p)}}\text{ for all }p\leq Y\}\text{.}$$
\item Moreover, given also a positive integer $q$, we define
$$W_q(g;Y)=\{n\in\mathbb{Z}:g'(n)\not\equiv0\pmod{p^{\gamma(g;p)}}\text{ for all }p\leq Y\text{ for which }p^{\gamma(g;p)}\mid q\}$$
which intuitively only captures the restrictions that can be read from $n\pmod{q}$.
\item Given a prime $p$, we let $j(g;p)$ be the number of roots of $g'$ modulo $p^{\gamma(p)}$.
\item In case the polynomial $g$ under consideration is one of the $h_d$ we abbreviate $\gamma(g;p)$, $W(g;Y)$, $W_q(g;Y)$ and $j(g;p)$ by $\gamma_d(p)$, $W_d(Y)$, $W^q_d(Y)$, and $j_d(p)$ respectively.
\end{enumerate}
\end{defn}

If $g(x)=a_0+a_1x+\cdots+a_kx^k$, define
$$\mathrm{cont}(g)=\mathrm{gcd}(a_1,\ldots,a_k)\text{.}$$
In \cite{Rice}, section 2.5, Rice proves two key properties of the auxiliary polynomials $h_d$ that will be vital to the application of the exponential sum estimates in the next section. One is the fact that $\mathrm{cont}(h_d)$ is absolutely bounded; hence in practice all the constants that occur in the exponential sum bounds in the next section that depend on $\mathrm{cont}(g)$ are for our purposes absolute constants provided that we only work with the polynomials $h_d$. The other is the fact that the set of moduli $q$ for which $h_d'$ vanishes identically modulo $q$ is contained in a finite set independent of $d$. This implies the following statement.

\begin{prop}
\label{logy}
Let
$$w_d(Y)=\prod_{p\leq Y}\left(1-\frac{j_d(p)}{p^{\gamma_d(p)}}\right)$$
be the expected proportion of integers in $W_d(Y)$. Then we have
$$w_d(Y)\gg_h(\log Y)^{1-k}\text{.}$$
\end{prop}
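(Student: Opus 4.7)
The plan is to exploit the uniform control cited just above the proposition: by Rice's analysis there is a finite set $S = S(h)$ of positive integers such that if $h_d'$ vanishes identically modulo $q$, then $q \in S$. Writing $M = \max S$, this immediately gives $p^{\gamma_d(p)-1} \leq M$ for every prime $p$ and every $d$ (when $\gamma_d(p) \geq 2$ the integer $p^{\gamma_d(p)-1}$ lies in $S$, and when $\gamma_d(p) = 1$ the bound is trivial), so in particular $\gamma_d(p) = 1$ whenever $p > M$. Setting $P = \max(M,k)$, both depending only on $h$, I will split the product defining $w_d(Y)$ according to whether $p \leq P$ or $p > P$ and treat the two ranges separately.

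For $p > P$ the equality $\gamma_d(p) = 1$ holds, and $h_d'$ is a nonzero polynomial of degree at most $k-1$ modulo $p$; hence $j_d(p) \leq k-1$ and the corresponding factor satisfies $1 - j_d(p)/p^{\gamma_d(p)} \geq 1 - (k-1)/p > 0$. For $p \leq P$ the very definition of $\gamma_d(p)$ forces $h_d'$ to have at least one non-root modulo $p^{\gamma_d(p)}$, whence $j_d(p) \leq p^{\gamma_d(p)} - 1$ and the corresponding factor is at least $p^{-\gamma_d(p)} \geq (pM)^{-1}$. Since the set $\{p \leq P\}$ is finite and independent of $d$, the product over these small primes is bounded below by a positive constant depending only on $h$.

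Combining the two parts reduces the proposition to the estimate
$$\prod_{P < p \leq Y}\left(1 - \frac{k-1}{p}\right) \gg_k (\log Y)^{1-k},$$
which follows from Mertens' theorem: expanding $\log(1 - (k-1)/p) = -(k-1)/p + O_k(p^{-2})$ and using $\sum_{p \leq Y} 1/p = \log\log Y + O(1)$, the logarithm of the product equals $-(k-1)\log\log Y + O_k(1)$, and exponentiating delivers the claim. The only ingredient that is not entirely routine is the uniformity in $d$ of the bound on $\gamma_d(p)$, and this is precisely what the Rice input supplies; beyond that, the argument is a short sieve bookkeeping computation and I do not expect any real obstacle.
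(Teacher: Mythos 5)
Your proof is correct and follows essentially the same route as the paper: both deduce from Rice's uniform control (the finite set of moduli at which $h_d'$ can vanish identically) that $\gamma_d(p) = 1$ for all $p$ outside a fixed finite set, bound $j_d(p) \leq k-1$ there, and invoke Mertens to conclude. Your write-up is actually a bit more careful than the paper's in the small-prime range, where the paper says only that ``all factors are clearly nonzero,'' while you supply the explicit $d$-independent lower bound $1 - j_d(p)/p^{\gamma_d(p)} \geq (pM)^{-1}$ coming from $j_d(p) \leq p^{\gamma_d(p)} - 1$ and $p^{\gamma_d(p)} \leq pM$; this is a genuine (if small) improvement in rigor over the published argument.
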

\begin{proof}
From our previous remark, we deduce that for all but finitely many primes $p$ we have $\gamma_d(p)=1$ for every $d$. Moreover, $h_d'$ has degree $k-1$ and hence has at most $k-1$ roots modulo $p$. It follows that the factor $1-j_d(p)/p^{\gamma_d(p)}$ is at least $1-(k-1)/p$ for all but finitely many primes. Hence, since all factors are clearly nonzero,
$$w_d(Y)\gg_h\prod_{k\leq p\leq Y}\left(1-\frac{k-1}{p}\right)\gg(\log Y)^{1-k}\text{,}$$
by a classical result of Mertens.
\end{proof}

In fact Rice makes precise in \cite{Rice}, via a standard Brun sieve argument, the intuition that $w_d(Y)$ should be close to the proportion of integers in $[1,X]$ lying in $W_d(Y)$, provided that $X$ is sufficiently large in terms of $Y$. More precisely, he proves the following:

\begin{lemma}
\label{sievelemma}
Let $g\in\mathbb{Z}[x]$ and $X,Y>0$ with $c\log X\geq\log Y\log\log Y$. Then,
$$|[1,X]\cap W(g;Y)|=X\prod_{p\leq Y}\left(1-\frac{j(g;p)}{p^{\gamma(g;p)}}\right)+O\left(Xe^{-c\frac{\log X}{\log Y}}\right)\text{,}$$
where $c>0$ depends only on $\mathrm{deg}(g)$ and the collection of moduli for which $g'$ vanishes identically.
\end{lemma}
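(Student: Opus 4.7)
\textbf{Proof plan for Lemma \ref{sievelemma}.} The statement is a sieve estimate, so my approach would be a direct application of Brun's pure sieve, adapted to the slightly unusual feature that the sifting conditions are congruences modulo prime powers $p^{\gamma(g;p)}$ rather than divisibility by $p$. Independence of these conditions across primes comes for free from the Chinese Remainder Theorem, which is all one really needs to push a classical Brun argument through.

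First, for each squarefree $d \mid P(Y) := \prod_{p \le Y} p$, define
$$\mathcal{A}_d = \{n \in [1,X] : g'(n) \equiv 0 \pmod{p^{\gamma(g;p)}} \text{ for all } p \mid d\}.$$
By CRT, $|\mathcal{A}_d| = X \nu(d)/\tilde d + O(\nu(d))$, where $\nu(d) = \prod_{p \mid d} j(g;p)$ and $\tilde d = \prod_{p \mid d} p^{\gamma(g;p)}$. Inclusion-exclusion yields
$$|W(g;Y) \cap [1,X]| = \sum_{d \mid P(Y)} \mu(d) |\mathcal{A}_d|,$$
and I would truncate this sum via Bonferroni at some level $s$ to be chosen, getting two-sided bounds that differ from $X \prod_{p \le Y}(1 - j(g;p)/p^{\gamma(g;p)})$ by the sum of (i) a truncation tail and (ii) an accumulated arithmetic remainder.

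Next, I would estimate the two error pieces. For the truncation tail,
$$X \sum_{r > s} \frac{1}{r!}\Bigl(\sum_{p \le Y} \tfrac{j(g;p)}{p^{\gamma(g;p)}}\Bigr)^{r} \ll X \sum_{r>s} \frac{(C \log\log Y)^r}{r!},$$
where the bound $\sum_{p \le Y} j(g;p)/p^{\gamma(g;p)} \ll_{g} \log\log Y$ follows from Mertens together with the fact that $\gamma(g;p) = 1$ and $j(g;p) \le k-1$ for all but finitely many $p$. For the arithmetic remainder, the crude bound
$$\sum_{\substack{d \mid P(Y) \\ \omega(d) \le s}} \nu(d) \le \sum_{r \le s} \binom{\pi(Y)}{r}(k-1)^r \ll (CY)^{s}$$
suffices. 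Choosing $s = \lfloor c \log X/\log Y\rfloor$ with $c$ small makes the arithmetic remainder $X^{c+o(1)}$, while Stirling turns the truncation tail into $X(Ce\log\log Y / s)^{s+1}$, which is $\ll X e^{-c'\log X/\log Y}$ provided $s$ dominates $\log\log Y$ by a large enough constant factor.

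The main (and really only) obstacle is calibrating $s$ so that both errors beat $Xe^{-c\log X/\log Y}$ at once: the arithmetic remainder forces $s$ small (not much larger than $\log X/\log Y$), while the Bonferroni truncation forces $s$ comfortably larger than $\log\log Y$. The hypothesis $c\log X \ge \log Y \log\log Y$ is precisely the pigeonhole that lets both constraints be satisfied simultaneously. The finitely many primes at which $g'$ vanishes identically modulo $p^{\gamma(g;p)-1}$ only affect a fixed finite set of local factors and get absorbed into the implied constants (and into the dependence of $c$ on $g$), so no further work is required.
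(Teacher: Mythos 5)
The paper does not prove this lemma itself but cites it as Proposition 2.4 of Rice's paper, whose argument the paper explicitly describes as ``a standard Brun sieve argument.'' Your proposal correctly reconstructs exactly that: a Brun pure sieve over the congruence conditions modulo $p^{\gamma(g;p)}$, with CRT handling the local densities, Bonferroni truncation at level $s\asymp\log X/\log Y$, and the hypothesis $c\log X\geq\log Y\log\log Y$ used precisely to let $s$ simultaneously dominate $\log\log Y$ (controlling the truncation tail via Stirling) and keep the arithmetic remainder $\ll(CY)^s$ below the target error.
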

\begin{proof}
This is \cite{Rice}, Proposition 2.4.
\end{proof}

\section{Exponential sum estimates}
\label{expsums}
In this section we record some estimates for exponential sums arising from a polynomial $g$ of degree $k$, with the main goal being to establish that certain exponential sums are small on appropriate minor arcs. We start by recalling some results of Rice \cite{Rice} on which we will build.

\begin{lemma}
\label{expsumi}
Suppose $k\in\mathbb{N}$, $g(x)=a_0+a_1x+\cdots+a_kx^k\in\mathbb{Z}[x]$, and let $J=|a_0|+\cdots+|a_k|$. If $X,Y>0$, $a,q\in\mathbb{N}$, $\alpha=a/q+\beta$, and $c\log(X/q)\geq\log Y\log\log Y$, then
\begin{align*}
\sum_{\substack{n=1\\n\in W(g;Y)}}^Xg'(n)e(g(n)\alpha)&=\frac{1}{q}\prod_{\substack{p\leq Y\\p^{\gamma(g;p)}\nmid q}}\left(1-\frac{j(g;p)}{p^{\gamma(g;p)}}\right)\sum_{\substack{s=0\\s\in W^q(g;Y)}}^{q-1}e\left(\frac{ag(s)}{q}\right)\int_0^X g'(x)e(g(x)\beta)dx\\
&+O\left(kJX^ke^{-c\frac{\log\left(\frac{X}{q}\right)}{\log Y}}(1+JX^k|\beta|)\right)\text{,}\\
\end{align*}
where $c=c(k,\mathrm{cont}(g))>0$.
\end{lemma}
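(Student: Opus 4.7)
Proof plan.

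The plan is to partition the sum by residue class modulo $q$, extract a character from each class, and then approximate each restricted sum over an arithmetic progression by the corresponding integral via partial summation combined with a Brun-type sieve in the AP. I first write every $n\in[1,X]$ as $n=qm+s$ with $0\le s<q$. Since $g\in\bZ[x]$, we have $g(qm+s)\equiv g(s)\pmod q$, and hence $e(g(n)\alpha)=e(ag(s)/q)\,e(g(n)\beta)$. If $s\notin W^q(g;Y)$, then for some $p\le Y$ with $p^{\gamma(g;p)}\mid q$ we have $g'(s)\equiv 0\pmod{p^{\gamma(g;p)}}$, and since $n\equiv s\pmod{p^{\gamma(g;p)}}$ it follows that $n\notin W(g;Y)$. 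Hence the total sum equals
$$\sum_{\substack{s=0\\s\in W^q(g;Y)}}^{q-1}e\!\left(\frac{ag(s)}{q}\right) S_s(\beta), \qquad S_s(\beta)=\sum_{\substack{1\le n\le X\\ n\equiv s\ (\mathrm{mod}\ q)\\ n\in W(g;Y)}} g'(n)e(g(n)\beta).$$

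Next, for $s\in W^q(g;Y)$ the $W(g;Y)$-conditions at primes $p$ with $p^{\gamma(g;p)}\mid q$ are automatic, so only the conditions at primes $p\le Y$ with $p^{\gamma(g;p)}\nmid q$ remain to be sieved. The Brun sieve argument underlying Lemma~\ref{sievelemma}, now run inside the AP $s+q\bZ$, yields under the hypothesis $c\log(X/q)\ge\log Y\log\log Y$
$$N_s(t):=\#\{n\le t:n\equiv s\pmod q,\,n\in W(g;Y)\}=\frac{t}{q}\,P+O\!\left(\tfrac{X}{q}\,e^{-c\log(X/q)/\log Y}\right)$$
uniformly for $1\le t\le X$, where $P:=\prod_{p\le Y,\,p^{\gamma(g;p)}\nmid q}\bigl(1-j(g;p)/p^{\gamma(g;p)}\bigr)$.

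I then pass from sum to integral by partial summation. Setting $\phi(t)=g'(t)e(g(t)\beta)$, Abel summation gives $S_s(\beta)=N_s(X)\phi(X)-\int_0^X N_s(t)\phi'(t)\,dt$; substituting the main term $(t/q)P$ for $N_s(t)$ and integrating by parts in reverse recovers exactly $(P/q)\int_0^X g'(x)e(g(x)\beta)\,dx$, while the remainder is controlled by the sieve error $\cE:=\tfrac{X}{q}e^{-c\log(X/q)/\log Y}$ multiplied by $|\phi(X)|+\int_0^X|\phi'(t)|\,dt$. The elementary estimates $|g^{(j)}(t)|\le k^j J t^{k-j}$ for $|t|\le X$ yield $|\phi(X)|+\int_0^X|\phi'(t)|\,dt\ll kJX^{k-1}(1+JX^k|\beta|)$, and summing the per-class error over the at most $q$ admissible residues $s$ replaces the $1/q$ factor by $1$, producing the stated bound $O\bigl(kJX^ke^{-c\log(X/q)/\log Y}(1+JX^k|\beta|)\bigr)$.

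The crux of the argument is the sieve step: one has to verify that the Brun-type sieve in an arithmetic progression really produces the density factor $P$ independently of $s\in W^q(g;Y)$ and with an error term of the same quality as Rice's unrestricted estimate, which is delicate at primes $p\mid q$ with $p^{\gamma(g;p)}\nmid q$ where the residues in $s+q\bZ$ cover only a subcoset modulo $p^{\gamma(g;p)}$. Once that counting is in hand, the rest of the proof is a routine application of partial summation.
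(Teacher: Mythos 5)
The paper does not prove this lemma; it cites Rice's Lemma 4.1 directly. Your plan, however, has a genuine gap precisely at the step you yourself flag as the crux, and it does not work out in the way you claim.

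After the residue-class decomposition, you assert that for every $s\in W^q(g;Y)$ the sieve run inside the progression $s+q\mathbb{Z}$ produces the \emph{same} density
$P=\prod_{p\le Y,\,p^{\gamma(g;p)}\nmid q}\bigl(1-j(g;p)/p^{\gamma(g;p)}\bigr)$,
with an acceptable error. This is false. At a prime $p$ with $p\mid q$ but $p^{\gamma(g;p)}\nmid q$, write $p^a\,\|\,q$ with $1\le a<\gamma(g;p)$. Inside $s+q\mathbb{Z}$ the residue $n\bmod p^a$ is frozen at $s\bmod p^a$, and $n\bmod p^{\gamma(g;p)}$ ranges over the $p^{\gamma(g;p)-a}$ lifts of this class. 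The local density of admissible $n$ at $p$ is therefore
$1-j_s(p)/p^{\gamma(g;p)-a}$, where $j_s(p)$ is the number of roots of $g'$ modulo $p^{\gamma(g;p)}$ lying above $s\bmod p^a$. This quantity genuinely depends on $s$; it averages to $1-j(g;p)/p^{\gamma(g;p)}$ over $s\bmod p^a$, but for an individual $s$ it can be $0$ or $1$. A concrete instance: take $g(x)=x^2$, so $g'(x)=2x$, $\gamma(g;2)=2$, $j(g;2)=2$, and $q=2$. Then $W^q(g;Y)=\mathbb{Z}$ (no prime power $p^{\gamma(g;p)}$ divides $2$), yet the density of $W(g;Y)$ within the even progression is $0$ (no even $n$ satisfies $2n\not\equiv0\pmod4$) while within the odd progression it is $2P$, not $P$. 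Your claimed uniform estimate
$N_s(t)=(t/q)P+O\bigl((X/q)e^{-c\log(X/q)/\log Y}\bigr)$
is therefore wrong for both $s=0$ and $s=1$, with an error of order $(t/q)P$, far larger than the permitted sieve error. Feeding the correct $s$-dependent densities $\rho_s$ into your partial-summation step would produce a main term of the form $q^{-1}\sum_s\rho_s\,e(ag(s)/q)\int$, which does not factor as $q^{-1}P\sum_s e(ag(s)/q)\int$ in general.

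So the issue you identified as needing verification is not merely delicate but is, as framed, false, and it is the whole content of the lemma; ``once that counting is in hand'' cannot be dispensed with as a routine point. To repair the argument one would have to carry the $s$-dependent local densities through and then show that the resulting $s$-sum still reorganizes into the product-times-character-sum form of the statement (which requires additional algebraic input, e.g.\ careful use of the $p$-adic stationary phase structure of $g$ near the roots of $g'$, not a straightforward Brun sieve in an arithmetic progression). The remaining steps of your proposal (the residue-class decomposition, the observation that $s\notin W^q$ forces $n\notin W$, and the partial summation with the derivative bounds $|g^{(j)}(t)|\ll kJ t^{k-j}$) are all fine, but the sieve step as written does not close.
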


\begin{proof}
This is Lemma 4.1 in \cite{Rice}.
\end{proof}

\begin{lemma}
\label{expsumii}
If $g\in\mathbb{Z}[x]$ with $\deg(g)=k\geq 2$, $a,q\in\mathbb{N}$ with $\mathrm{gcd}(a,q)=1$, and $Y>0$, then
$$\left|\sum_{\substack{s=0\\s\in W^q(g;Y)}}^{q-1}e\left(\frac{ag(s)}{q}\right)\right|\ll_k\mathrm{gcd}(\mathrm{cont}(g),q)^3C^{\omega(q)}\begin{cases}q^{1/2}\text{ if }q\leq Y\\q^{1-1/k}\text{ for all }q\end{cases}\text{,}$$
where $C=C(k)$ and $\omega(q)$ is the number of distinct prime factors of $q$.
\end{lemma}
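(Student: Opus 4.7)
The plan is to decompose $q=\prod_p p^{\alpha_p}$ and use the Chinese Remainder Theorem to factor the character sum as a product of local sums, one modulo each prime power $p^{\alpha_p}$, with numerator coprime to $p$. Bounding each local sum separately and multiplying yields the global estimate; the factor $C^{\omega(q)}$ in the conclusion simply absorbs the $O_k(1)$ loss sustained at each prime factor.

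For the local sum modulo $p^{\alpha_p}$ I would distinguish three regimes. If $p>Y$, or $p\leq Y$ but $p^{\gamma(g;p)}\nmid q$, then $W^q(g;Y)$ imposes no constraint, and I would apply the classical Weyl--Hua estimate for polynomial exponential sums to obtain the bound $\ll_k p^{\alpha_p(1-1/k)}$, which contributes to the $q^{1-1/k}$ alternative. If instead $p\leq Y$ and $p^{\gamma(g;p)}\mid q$, the restriction $g'(s)\not\equiv 0\pmod{p^{\gamma(g;p)}}$ is active, and I would invoke a $p$-adic stationary phase argument. Writing $s=u+p^{\alpha_p-\gamma(g;p)}v$ with $v$ running modulo $p^{\gamma(g;p)}$, a Taylor expansion yields
$$g(s)\equiv g(u)+p^{\alpha_p-\gamma(g;p)}v\,g'(u)\pmod{p^{\alpha_p}}$$
whenever $\alpha_p\geq 2\gamma(g;p)$ (since the next term carries $p$-adic valuation at least $2(\alpha_p-\gamma(g;p))\geq \alpha_p$). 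Summing over $v$ gives a geometric sum that vanishes unless $p^{\gamma(g;p)}\mid g'(u)$; the $W^q$ condition forbids precisely this, so the local sum is identically $0$ in that range. The residual range $\gamma(g;p)\leq\alpha_p<2\gamma(g;p)$, together with the base case $\alpha_p=\gamma(g;p)=1$, is handled by the Weil bound (with a bounded $O_k(1)$ correction for the residues where $g'(s)\equiv 0$), producing square-root cancellation $\ll_k p^{\alpha_p/2}$. Multiplying across primes gives the $q^{1/2}$ bound when $q\leq Y$ (where every prime divisor falls into this regime) and, using the Weyl--Hua bound at the remaining primes, the $q^{1-1/k}$ bound in general.

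The main obstacle, and the source of the factor $\mathrm{gcd}(\mathrm{cont}(g),q)^3$, is the handling of primes $p\mid\mathrm{cont}(g)$. At such primes one has $\gamma(g;p)\geq 2$, and neither the Weil bound (whose hypothesis requires the leading coefficient of $g$ coprime to $p$) nor the elementary trivial estimate on the short residual range $\alpha_p<2\gamma(g;p)$ gives the required square-root cancellation on its own. My strategy would be to pull out the $p$-adic content explicitly: writing $g(x)=g(0)+\mathrm{cont}(g)\tilde g(x)$ and iteratively factoring common divisors of $\mathrm{cont}(g)$ and $p^{\alpha_p}$, one reduces the problem to an exponential sum for a polynomial whose content is coprime to $p$, at the cost of a polynomial factor in $\mathrm{gcd}(\mathrm{cont}(g),p^{\alpha_p})$. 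Carefully tracking these reductions uniformly across all primes to extract the (probably loose) cubic power of $\mathrm{gcd}(\mathrm{cont}(g),q)$ is where I expect the most delicate bookkeeping to lie.
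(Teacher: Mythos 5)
The paper does not prove this lemma; it simply cites Lemma 4.3 of Rice's paper \cite{Rice}. Your proposal is therefore an attempt to reconstruct Rice's argument from scratch, and the high-level plan you describe --- factoring the restricted sum across the prime powers dividing $q$ via the Chinese Remainder Theorem, killing the local sums with $\alpha_p\geq 2\gamma(g;p)$ by $p$-adic stationary phase using the $W^q$ restriction, invoking Weil at $\alpha_p=1$ and Weyl--Hua for the unconditional $q^{1-1/k}$ bound --- is indeed the right shape and matches Rice's approach.

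However there is a concrete gap in the middle of the argument. You claim that the range $\gamma(g;p)\leq\alpha_p<2\gamma(g;p)$ is ``handled by the Weil bound (with a bounded $O_k(1)$ correction),'' but the Weil bound applies to exponential sums modulo a \emph{prime}, not modulo $p^{\alpha}$ with $\alpha\geq 2$. When $\gamma(g;p)\geq 2$ this residual range contains genuine prime-power moduli for which Weil says nothing, and obtaining square-root cancellation there is precisely the hard part of the proof. Your subsequent discussion of the $\mathrm{gcd}(\mathrm{cont}(g),q)^3$ factor is aimed at this hole, but it is a sketch with the crucial bookkeeping deferred, and it also misdiagnoses the obstruction slightly: for primes $p\leq k$ one can have $\gamma(g;p)\geq 2$ even with $p\nmid\mathrm{cont}(g)$, because the coefficients $ja_j$ of $g'$ may be divisible by $p$ through the factor $j$ rather than through $a_j$ (e.g.\ $g(x)=x^p$ has $\mathrm{cont}(g)=1$ but $\gamma(g;p)=2$). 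So ``pull out the $p$-adic content'' does not by itself dispatch the bad primes; Rice must separately control the small primes $p\leq k$ (absorbing them into the $k$-dependent constants) and the primes dividing $\mathrm{cont}(g)$ (absorbing them into the cubic content factor). Until the intermediate range $\gamma\leq\alpha<2\gamma$ is actually treated, the proof is incomplete.
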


\begin{proof}
This is Lemma 4.3 in \cite{Rice}.
\end{proof}

\begin{lemma}
\label{expsumiii}
Suppose $k\in\mathbb{N}$, $g(x)=a_0+a_1x+\cdots+a_kx^k\in\mathbb{Z}[x]$ with $a_k>0$. Suppose further that $X,Y,Z\geq2$, $YZ\leq X$, and $a,q\in\mathbb{N}$ with $\mathrm{gcd}(a,q)=1$. If $|\alpha-a/q|<1/q^2$, then
$$\left|\sum_{\substack{n=1\\n\in W(g;Y)}}^Xe(g(n)\alpha)\right|\ll_k\mathrm{cont}(g)^6(\log Y)^{ek}X\left(e^{-\frac{\log Z}{\log Y}}+\left(a_k\log^{k^2}(a_kqX)\left(q^{-1}+\frac{Z}{X}+\frac{qZ^k}{a_kX^k}\right)\right)^{2^{-k}}\right)\text{.}$$
\end{lemma}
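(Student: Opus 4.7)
My plan combines a Brun-type sieve decomposition of the sifted indicator with Weyl's inequality for polynomial exponential sums, treating $Z$ as a common truncation level for both procedures.

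First, to handle the sifting condition, I would decompose
\[
\mathbf{1}_{W(g;Y)}(n) = \prod_{p\leq Y}\bigl(1 - \mathbf{1}_{p^{\gamma(g;p)}\mid g'(n)}\bigr),
\]
expand the product as a signed sum over subsets $S\subseteq\{p\leq Y\}$, and truncate at those $S$ for which $Q_S := \prod_{p\in S} p^{\gamma(g;p)} \leq Z$. The tail contribution is controlled, via Lemma~\ref{sievelemma} together with Rankin's trick, by $\ll Xe^{-\log Z/\log Y}$, which yields the first error term in the statement. An equivalent reformulation is to partition $[1,X]$ into blocks of length about $X/Z$ and apply the asymptotic of Lemma~\ref{sievelemma} on each block.

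Second, for the main contribution one is left with exponential sums of $e(g(n)\alpha)$ over arithmetic progressions modulo some $Q_S\leq Z$ (equivalently, over short intervals of length $\leq X/Z$). On each such piece I would apply $k$ rounds of van der Corput differencing. Weyl's inequality for a degree-$k$ polynomial with leading coefficient $a_k$ on an interval of length $X/Z$, under the Dirichlet approximation $|\alpha-a/q|<1/q^2$, yields a bound of the shape
\[
(X/Z)\,\log^{k^2}(a_kqX)\bigl(q^{-1}+Z/X+qZ^k/(a_kX^k)\bigr)^{2^{-k}},
\]
and summing over the $Z$ pieces gives the second term of the estimate. The $a_k$-dependence in the last factor arises because the small denominator relevant to the fully differenced linear sum is that of the best rational approximation to $k!\,a_k\alpha$ rather than to $\alpha$ itself.

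The remaining factors are bookkeeping. The $(\log Y)^{ek}$ factor absorbs a Mertens-type count of retained sieve subsets together with an extra Cauchy--Schwarz step used to disentangle the sieve weights from the phase before Weyl is applied (which is also why the final exponent is $2^{-k}$, a square-root loss from the sharper $2^{1-k}$ of pure Weyl); the $\mathrm{cont}(g)^6$ factor propagates from the $\mathrm{cont}(g)$-dependence already present in Lemma~\ref{expsumii} and in the Brun-sieve setup.

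The principal technical obstacle will be choosing the sieve truncation parameter and the Weyl differencing parameter to coincide as a single $Z$, so that the two error terms in the statement are realized simultaneously with one free parameter to optimize; a related subtlety is tracking the $a_k$- and $\mathrm{cont}(g)$-dependence through the van der Corput iterations, since both quantities evolve under the polynomial shifts arising in each round of differencing, and the Weyl factor must reproduce the correct $Z$-dependence in all three of its terms.
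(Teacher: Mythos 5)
The paper itself offers no proof of this statement: it is imported verbatim as Lemma~4.5 of Rice's paper \cite{Rice}. So the comparison has to be against Rice's own argument, which your outline does reconstruct in broad strokes: Rice's proof likewise splits the estimate into a sieve-error piece, which produces the $e^{-\log Z/\log Y}$ term, and a Weyl-type minor-arc piece, which produces the $(\cdots)^{2^{-k}}$ term, with the $(\log Y)^{ek}$ being a Mertens-type loss. The ``equivalent reformulation'' you mention --- partitioning $[1,X]$ into $\asymp Z$ blocks of length $\asymp X/Z$, using Lemma~\ref{sievelemma} to control $|W(g;Y)\cap I|$ on each block, and applying a Weyl-type bound at scale $X/Z$ --- is the one that actually matches the shape of the final estimate: the three terms $q^{-1}$, $Z/X$, and $qZ^k/(a_kX^k)$ are precisely $q^{-1}$, $1/M$, $q/(a_kM^k)$ with $M=X/Z$, which is the Weyl bound on an interval of length $M$. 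The hypothesis $YZ\le X$ enforces $M\ge Y$, the minimal block length for Lemma~\ref{sievelemma} to be applicable, so this is almost certainly the intended route. Your primary formulation (Brun expansion and Weyl on progressions modulo $Q_S$) is consistent in spirit but not obviously compatible with the precise shape of the second error term.

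There are a few places where the outline is not yet a proof and where the gaps are substantive. First, naively truncating the full inclusion--exclusion over $S\subseteq\{p\le Y\}$ and then quoting Lemma~\ref{sievelemma} plus Rankin's trick to control the tail is circular: Lemma~\ref{sievelemma} is itself the output of a Brun sieve and estimates $|[1,X]\cap W(g;Y)|$ with the \emph{full} sifting condition, not the tail of a truncated signed sum; that tail has no definite sign and must instead be handled by constructing genuine upper and lower bound sieves \`a la Brun (truncating at even/odd $|S|$ with bounded $\Omega(Q_S)$, not merely $Q_S\le Z$), which is where the Rankin exponential saving actually enters. Second, ``$k$ rounds of van der Corput differencing'' is one too many for a degree-$k$ polynomial; Weyl differencing reduces degree $k$ to degree $1$ in $k-1$ rounds yielding exponent $2^{1-k}$, and the further halving to $2^{-k}$ needs to be produced by an explicit extra Cauchy--Schwarz (which you do gesture at), so the phrasing should be corrected to $k-1$ rounds plus one additional averaging step. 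Third, the $a_k$-dependence in the bound is subtler than ``the relevant rational is the approximant to $k!\,a_k\alpha$'': one must make quantitative the relation between a Dirichlet approximation $|\alpha-a/q|<1/q^2$ and a usable approximation to $a_k\alpha$ (or to the leading coefficient after shifting by a block endpoint), and track how both the numerator and denominator of the new approximation degrade by factors of $a_k$; the claim that $\mathrm{cont}(g)^6$ simply ``propagates from Lemma~\ref{expsumii}'' is also not immediate, since that lemma carries only a cube of $\gcd(\mathrm{cont}(g),q)$. None of these are fatal to the strategy, but each needs to be carried out explicitly before the argument is complete.
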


\begin{proof}
This is Lemma 4.5 in \cite{Rice}.
\end{proof}

We will use these to prove the following key estimate. In the following, set $\rho=1/2^{k+2}k^2$.

\begin{lemma}
\label{1stc}
Suppose $N\gg_{h}1$, and let $r>0$ be arbitrary. Let $d$ be such that $d\ll N^\rho$. Set $M=\left\lfloor(N/b_d)^{1/k}\right\rfloor$. Then either $\sigma\leq(\log N)^{-\log\log\log N/4}$, or there exists a constant $\kappa$ (possibly depending on $r$) such that, setting $Y=\sigma^{-(k+2)}$,
$$\left|\frac{1}{w_d(Y)N}\sum_{\substack{m=1\\n\in W_d(Y)}}^{M}h_d'(m)e\left(h_d(m)\gamma\right)\right|\leq r\sigma$$ whenever $\gamma\in\fm(N,\kappa/\sigma,\kappa/\sigma^{k+1})$.
\end{lemma}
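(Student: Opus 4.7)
The plan is to run a standard minor-arc circle-method estimate, splitting based on the size of the Dirichlet denominator of $\gamma$.

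First, I would apply Dirichlet's theorem to $\gamma$ with a suitable parameter $P$, obtaining $(a,q)$ with $\gcd(a,q)=1$, $1 \le q \le P$, and $|\beta| := |\gamma - a/q| \le 1/(qP)$. In Case A, when $q \le Q$, the minor-arc hypothesis forces $|\beta| > K/N = \kappa/(\sigma N)$, and I apply Lemma \ref{expsumi} with $X = M$. The integral appearing in the main term, via the substitution $u = h_d(x)$, has modulus $\le 1/(\pi|\beta|) < \sigma N/(\pi \kappa)$. The singular sum is bounded by Lemma \ref{expsumii} (valid since $Q \le Y$ in the regime of interest and $\mathrm{cont}(h_d) = O_h(1)$ by Rice's result). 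The factor $(1/q)(\text{product})/w_d(Y)$ becomes $1/(q A_q)$ with $A_q := \prod_{p^{\gamma_d(p)} \mid q,\,p \le Y}(1 - j_d(p)/p^{\gamma_d(p)})$, and a standard sieve inequality shows that $C^{\omega(q)}(\log q)^{O(k)}/(A_q q^{1/2})$ is uniformly bounded by some $C_k$ depending only on $k$. Choosing $\kappa \gg_k C_k/r$ makes the normalized main term at most $r\sigma/2$; the Lemma \ref{expsumi} error is negligible once $P$ is chosen so that $N|\beta|$ stays polylogarithmic in $N$.

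In Case B, when $q > Q$, the trivial bound $|I| \le h_d(M) \asymp N$ combined with Lemma \ref{expsumii} yields a normalized main term of order $C^{\omega(q)}(\log q)^{O(k)} q^{-1/2}/w_d(Y)$ (or with $q^{-1/k}$ when $q > Y$). Since $q > Q = \kappa/\sigma^{k+1}$, this is $O(\sigma^{(k-1)/2} \mathrm{polylog}(1/\sigma)) \cdot \sigma$, which is $o(\sigma)$ for $\sigma$ small, hence $\le r\sigma/2$. When $q$ is so large that the Lemma \ref{expsumi} error becomes uncontrolled (roughly when $\log(M/q)$ drops below polylogarithmic thresholds), I would instead perform partial summation: by Proposition \ref{jb}, $J_d \ll_h b_d$, giving $|S| \ll (N/M) \sup_m |T_m|$ with $T_m := \sum_{n \le m,\,n \in W_d(Y)} e(h_d(n)\gamma)$, and Lemma \ref{expsumiii} applies to $T_m$. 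A choice like $Z = \exp(C \log^2(1/\sigma))$ makes $e^{-\log Z/\log Y} \ll \sigma$, while the constraint $d \ll N^\rho$ with $\rho = 1/(2^{k+2}k^2)$ keeps $b_d$ small enough that the other terms in the Weyl bound are comparably controlled.

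The main technical obstacle is simultaneously keeping the Lemma \ref{expsumi} error small (which demands $q$ not too close to $M$ and $N|\beta|$ at most polylog) and the Weyl bound of Lemma \ref{expsumiii} effective (which demands $q$ not too close to $M^k$, so that the $qZ^k/(b_d M^k)$ term is negligible). The smallness of $\rho$ together with the hypothesis $\sigma \ge (\log N)^{-\log\log\log N/4}$ (so that $\log(1/\sigma) \le O(\log\log N \log\log\log N) \ll \log N$) gives just enough room to make both work after a careful choice of $P$ and $Z$.
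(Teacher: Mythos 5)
Your proposal follows essentially the same route as the paper: Dirichlet approximation of $\gamma$, then Lemma \ref{expsumi} combined with Lemma \ref{expsumii} (plus the minor-arc lower bound on $|\beta|$) for moderately small $q$, and partial summation plus the Weyl-type estimate of Lemma \ref{expsumiii} for large $q$. The case split is organized slightly differently (you split at $q=Q$ and then peel off a ``very large $q$'' subcase, whereas the paper splits cleanly at $q=N^{1/2k}$ and then subdivides the small-$q$ case at $q=Q$), and in the $q\le Q$ case you invoke the $q^{1/2}$ bound from Lemma \ref{expsumii} where the paper just bounds the complete sum trivially, but these are cosmetic differences.

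One point is stated imprecisely in a way that would cause trouble if taken literally: you say $P$ should be chosen ``so that $N|\beta|$ stays polylogarithmic in $N$.'' Since $N|\beta|\le N/(qP)$ and $q$ may be as small as $1$, this forces $P\gg N/\mathrm{polylog}(N)$; but then $q$ can reach $N/\mathrm{polylog}(N)$, which makes $qZ^k/(b_dM^k)\asymp qZ^k/N$ large and kills Lemma \ref{expsumiii}. The resolution, which the paper makes explicit, is that $P$ must be taken to be $N/Z^{2k}$ for the \emph{same} $Z$ used in Lemma \ref{expsumiii}; then $N|\beta|\le Z^{2k}/q$ is only \emph{sub-polynomial} in $N$ (of size $\exp(O((\log\log N)^{3}))$ for the paper's $Z$, or $\exp(O(\log^2(1/\sigma)))$ for yours), not polylogarithmic — but this is still enough to lose to $e^{-c\log(M/q)/\log Y}$ in the range where Lemma \ref{expsumi} is used, and simultaneously $q\le N/Z^{2k}$ keeps $qZ^k/(b_dM^k)\le Z^{-k}$ small in the large-$q$ range. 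So $P$ and $Z$ cannot be optimized independently; tying them together as above is the missing detail. A small arithmetic slip: for $Q<q\le Y$, using $q^{-1/2}$ gives $O(\sigma^{(k+1)/2})$, not $\sigma^{(k-1)/2}\cdot\sigma$ — same conclusion, but worth fixing in a write-up.
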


\begin{remark}
The denominator $w_d(Y)N$ is the trivial bound for the exponential sum, so that the Lemma says something about how much we can improve over the trivial bound.
\end{remark}

\begin{proof}
Let $S$ be the expression we wish to bound. Throughout the argument, we abbreviate $w_d(Y)$ by $w$, and let $Z=e^{(\log\log N)^3}$. Dirichlet's approximation theorem yields a positive integer $q$, with $1\leq q\leq N/Z^{2k}$, and an integer $a$ with $1\leq a\leq q$ coprime to $q$, such that
$$\left|\gamma-\frac{a}{q}\right|<\frac{Z^{2k}}{qN}\text{.}$$
Write $\gamma=a/q+\beta$. We split the argument into several cases.

Suppose first that $q\leq N^{1/2k}$. Recalling that $J_d\ll b_d$ (Proposition \ref{jb}) and that hence $J_d M^d\ll N$, Lemma \ref{expsumi} together with Proposition \ref{logy} yields
$$S=\frac{w_q}{qN}\sum_{\substack{s=0\\s\in W_d^q(Y)}}^{q-1}e\left(\frac{ah_d(s)}{q}\right)\int_0^Mh_d'(x)e(h_d(x)\beta)dx+O_h\left((\log Y)^{k-1}e^{-c\frac{\log(M/q)}{\log Y}}Z^{2k}\right)\text{,}$$
where
$$w_q=\prod_{\substack{p\leq Y\\p^{\gamma_d(p)\mid q}}}\left(1-\frac{j_d(p)}{p^{\gamma_d(p)}}\right)^{-1}\text{.}$$
Since $q\leq N^{1/2k}$ we have $\log(M/q)\asymp\log(N)$, and hence
$$(\log Y)^{k-1}e^{-c\frac{\log(M/q)}{\log Y}}Z^{2k}\ll(\log(\sigma^{-1}))^{k-1}e^{-c\frac{\log(N)}{\log(\sigma^{-1})}}e^{2k(\log\log N)^3}$$
and if $\sigma^{-1}\leq(\log N)^{\log\log\log N/4}$, then the above is $O(\sigma^t)$ for any $t>0$, and in particularly is $o(\sigma)$. Hence we can focus on controlling the main term. For that, we observe that
$$\left|\int_0^Mh_d'(x)e(h_d(x)\beta)dx\right|=\left|\int_{h_d(0)}^{h_d(M)}e(y\beta)dy\right|\ll\min\{N,|\beta|^{-1}\}\text{,}$$
provided that $h_d(0)\ll N$; but since by definition $h_d(0)=h(r_d)/\lambda(d)\ll d^{k-1}$ (recall that $|r_d|<d$), this condition certainly holds, taking into account that we are assuming $d\ll N^{\rho}$. Hence we have
\begin{equation}
\label{rec}
S\ll\frac{w_q}{qN}\sum_{\substack{s=0\\s\in W_d^q(Y)}}^{q-1}e\left(\frac{ah_d(s)}{q}\right)\min\{N,|\beta|^{-1}\}+o_{h,N}(\sigma)\text{.}
\end{equation}
We now split into two further sub-cases.

If $q>\kappa/\sigma^{k+1}$, then Lemma \ref{expsumii}, together with the observation that $w_q\ll_h 2^{\omega(q)}$, guarantees that the exponential sum is bounded above by $C^{\omega(q)}q^{1-1/k}$ for some constant $C$, which implies that
\begin{align*}S&=O\left(\frac{C^{\omega(q)}}{q^{1/k}}\right)+o_{h,N}(\sigma)\\
&=O\left(\frac{1}{q^{1/(k+1)}}\right)+o_{h,N}(\sigma)\\
\end{align*}
by using the trivial bound $C^{\omega(q)}\ll q^{1/k-1/(k+1)}$. Using $q>\kappa/\sigma^{k+1}$ yields now
$$S=O\left(\sigma/\kappa^{1/(k+1)}\right)+o_{h,N}(\sigma)$$
which gives the desired result in this case for an appropriate choice of $\kappa$.

If instead $q\leq\kappa/\sigma^{k+1}$, then the assumption that $\gamma\in\fm(N,\kappa/\sigma,\kappa/\sigma^{k+1})$ guarantees that $|\beta|\geq\frac{\kappa}{\sigma N}$. Bounding the exponential sum trivially this time and bounding $\min\{N,|\beta|^{-1}\}$ from above by $|\beta|^{-1}$, we obtain
$$S=O\left(\frac{|\beta|^{-1}}{N}\right)+o_{h,N}(\sigma)=O\left(\frac{\sigma}{\kappa}\right)+o_{h,N}(\sigma)\text{.}$$
This gives the desired result in this case, by choosing an appropriate value of $\kappa$.

We now tackle the case when $q>N^{1/2k}$. Using partial summation, we may write
$$S=\frac{1}{wN}\left(h_d'(M)\sum_{\substack{n=1\\n\in W_d(Y)}}^Me(h_d(n)\gamma)-\int_1^Mh_d''(t)\left(\sum_{\substack{1\leq n\leq t\\n\in W_d(Y)}}e(h_d(n)\gamma)\right)dt\right)\text{.}$$
The assumptions
$$\frac{N}{Z^{2k}}>q>N^{1/2k},\quad d<N^\rho,\quad\sigma>(\log N)^{-\log\log\log N/4}\text{,}$$
together with the fact that $b_d\ll d^k$ are easily seen to imply that
\begin{equation}
\label{truck}(\log Y)^{ek}X\left(e^{-\frac{\log Z}{\log Y}}+\left(b_d\log^{k^2}(b_dqX)\left(q^{-1}+\frac{Z}{X}+\frac{qZ^k}{b_dX^k}\right)\right)^{2^{-k}}\right)\ll wM\sigma
\end{equation}
for any $X$ with $\sigma M<X\leq M$. This is the step in the argument that justifies the choice of the exponent $\rho$; it is chosen so that $b_dq^{-2^{-k}}$ is bounded above by a negative power of $N$, namely $N^{k\rho-1/(k\cdot 2^{k+1})}$, and hence yields a power saving against which logarithmic factors and powers of $\sigma^{-1}$ (which are smaller than any positive power of $N$) cannot compete. It also justifies the exponent of $3$ (in fact any exponent bigger than $2$ would suffice) in our choice of $Z$; it makes $e^{\log Z/\log Y}$ bigger than any power of $\sigma^{-1}$. Observe also that $YZ\leq\sigma M$ if $N$ is large enough. Applying this with $X=M$, together with Lemma \ref{expsumiii} and observing that $h_d'(t)\ll b_dt^{k-1}$, yields immediately
$$h_d'(M)\sum_{\substack{n=1\\n\in W_d(Y)}}^Me(h_d(n)\gamma)\ll b_dM^dw\sigma\asymp wN\sigma\text{.}$$
We now decompose
\begin{align*}
&\int_1^Mh_d''(t)\left(\sum_{\substack{1\leq n\leq t\\n\in W_d(Y)}}e(h_d(n)\gamma)\right)dt\\
&=\int_1^{\sigma M}h_d''(t)\left(\sum_{\substack{1\leq n\leq t\\n\in W_d(Y)}}e(h_d(n)\gamma)\right)dt+\int_{\sigma M}^Mh_d''(t)\left(\sum_{\substack{1\leq n\leq t\\n\in W_d(Y)}}e(h_d(n)\gamma)\right)dt\text{.}\\
\end{align*}
For the first integral, we observe that $h_d''(t)\ll b_dM^{k-2}$ for $1\leq t\leq\sigma M$ and bound the exponential sum trivially by $\sigma M$, yielding that
$$\int_1^{\sigma M}h_d''(t)\left(\sum_{\substack{1\leq n\leq t\\n\in W_d(Y)}}e(h_d(n)\gamma)\right)dt\ll b_dM^d\sigma^2\ll wN\sigma\text{.}$$
Finally, we use \eqref{truck}, Lemma \ref{expsumiii} and the bound $h_d''(t)\ll b_dM^{k-2}$ to conclude that
$$\int_{\sigma M}^Mh_d''(t)\left(\sum_{\substack{1\leq n\leq t\\n\in W_d(Y)}}e(h_d(n)\gamma)\right)dt\ll M\cdot b_dM^{k-2}\cdot wM\sigma\asymp wN\sigma\text{.}$$
Together, these imply that $S=O(\sigma)$ in this case, finishing the proof.
\end{proof}

\begin{lemma}
\label{2ndc}
There exists $\varepsilon=\varepsilon(k)>0$ for which the following statement is true. Keep the notation and assumptions of Lemma \ref{1stc}. Fix $q\leq\kappa/\sigma^{k+1}$ and some $a$ with $1\leq a\leq q$ and $\mathrm{gcd}(a,q)=1$. Then either $\sigma<(\log N)^{-\varepsilon\log\log\log N}$, or the following hold:
\begin{enumerate}[label=(\roman*)]
\item For any $\gamma\in\fM_{a,q}(N,\kappa/\sigma)$,
$$\left|\frac{1}{wN}\sum_{\substack{m=1\\n\in W_d(Y)}}^{M}h_d'(m)e\left(h_d(m)\gamma\right)\right|\ll(\log N)^{1/4}q^{-1/2}\min\left\{1,(N|\beta|)^{-1}\right\}\text{.}$$
\item We have
$$\int_{\fM_{a,q}(N,\kappa/\sigma)}\left|\frac{1}{wN}\sum_{\substack{m=1\\n\in W_d(Y)}}^{M}h_d'(m)e\left(h_d(m)\gamma\right)\right|^2d\gamma\ll \frac{(\log N)^{1/2}}{qN}\text{.}$$
\end{enumerate}
\end{lemma}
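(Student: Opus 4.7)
The plan is to apply Lemma \ref{expsumi} with $X=M$ and $\alpha=a/q+\beta$ to the normalized sum appearing on the left-hand side of (i), which we abbreviate $S(\gamma)$. Writing $w:=w_d(Y)$ and letting $w_q$ denote the quantity introduced in the proof of Lemma \ref{1stc}, the product over primes in Lemma \ref{expsumi} equals $w\cdot w_q$, so after dividing by $wN$ the lemma yields
$$S(\gamma) = \frac{w_q}{qN}\sum_{\substack{s=0\\s\in W_d^q(Y)}}^{q-1}e\!\left(\frac{a h_d(s)}{q}\right)\int_0^M h_d'(x)e(h_d(x)\beta)\,dx + \mathrm{Err},$$
where the oscillatory integral is $\ll \min\{N,|\beta|^{-1}\}$ exactly as in the proof of Lemma \ref{1stc}, using $h_d(0)\ll d^k\ll N$.

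The decisive observation is that $q\le\kappa/\sigma^{k+1}\le\sigma^{-(k+2)}=Y$ as soon as $\sigma$ is small enough (which we may assume), placing us in the square-root cancellation regime of Lemma \ref{expsumii}. Combined with the uniform boundedness of $\mathrm{cont}(h_d)$, this gives
$$\left|\sum_{\substack{s=0\\s\in W_d^q(Y)}}^{q-1}e\!\left(\frac{a h_d(s)}{q}\right)\right|\ll_h C^{\omega(q)}q^{1/2}$$
for some $C=C(k)$. Together with $w_q\ll 2^{\omega(q)}$ (each factor is at most $2$ outside a finite set of primes), this yields
$$|S(\gamma)|\ll C^{\omega(q)}q^{-1/2}\min\{1,(N|\beta|)^{-1}\} + |\mathrm{Err}|/(wN).$$
To absorb $C^{\omega(q)}$ into $(\log N)^{1/4}$, note that under the hypothesis $\sigma\ge(\log N)^{-\varepsilon\log\log\log N}$ we have $q\le\kappa(\log N)^{(k+1)\varepsilon\log\log\log N}$; the standard divisor bound $\omega(q)\ll\log q/\log\log q$ then yields $\omega(q)\ll\varepsilon(k+1)\log\log N$, so choosing $\varepsilon=\varepsilon(k)$ small enough in terms of $C$ closes the gap. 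The error $\mathrm{Err}/(wN)$ is negligible: since $q$ is polylogarithmic in $N$, $\log(M/q)\asymp\log N$, while $\log Y\ll\log\log N\cdot\log\log\log N$, so the decay factor $e^{-c\log(M/q)/\log Y}$ beats any negative power of $\log N$ and easily absorbs the $(1+N|\beta|)\ll\sigma^{-1}$ factor coming from Lemma \ref{expsumi}. This establishes (i).

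For (ii), we square the bound from (i) and integrate:
$$\int_{\fM_{a,q}(N,\kappa/\sigma)}|S(\gamma)|^2\,d\gamma \ll (\log N)^{1/2}q^{-1}\int_{|\beta|\le\kappa/(\sigma N)}\min\{1,(N|\beta|)^{-2}\}\,d\beta \ll \frac{(\log N)^{1/2}}{qN},$$
the final inequality being the routine split at $|\beta|=1/N$. The only subtle point in the whole argument is the calibration of $\varepsilon=\varepsilon(k)$ needed to tame the divisor-type factor $C^{\omega(q)}$; everything else is a direct combination of Lemmas \ref{expsumi} and \ref{expsumii} with the same integral estimates already used in the proof of Lemma \ref{1stc}.
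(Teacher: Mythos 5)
Your proof follows the same route as the paper: recover equation \eqref{rec} via Lemma \ref{expsumi}, note that $q\le\kappa/\sigma^{k+1}\le Y$ places you in the square-root cancellation regime of Lemma \ref{expsumii}, use $w_q\ll 2^{\omega(q)}$ and $\omega(q)\ll\log q/\log\log q$ to reduce $C^{\omega(q)}$ to a power $(\log N)^{(k+1)\varepsilon\log C}$ controllable by shrinking $\varepsilon$, absorb the error term, and deduce (ii) by squaring (i) and splitting the integral at $|\beta|=1/N$. The calibration of $\varepsilon$ and the absorption of the error are handled the same way as in the paper, so this is essentially the paper's own argument.
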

\begin{proof}
We recover the first part of the proof of Lemma \ref{1stc}, namely \eqref{rec}. Since $q\leq Y$, we can bound the exponential sum by $C^{\omega(q)}q^{1/2}$, according to Lemma \ref{expsumii}. Using the bounds $w_q\ll 2^{\omega(q)}$ and $\omega(q)\ll\log q/\log\log q$, we obtain
$$S=O\left(\frac{C^{\log q/\log \log q}}{q^{1/2}}\min\{1,(N|\beta|)^{-1}\}\right)+O_t(\sigma^t)$$
(note that we had remarked earlier that the error term $o(\sigma)$ was actually $O(\sigma^t)$ for any $t>0$). The error term can hence be absorbed into the main term (recall that both $q$ and $\beta$ are bounded by powers of $\sigma^{-1}$) and hence
$$S=O\left(\frac{C^{\log q/\log \log q}}{q^{1/2}}\min\{1,(N|\beta|)^{-1}\}\right)\text{.}$$
We now choose $\varepsilon>0$ so that the assumption that $\sigma\geq(\log N)^{-\varepsilon\log\log\log N}$ implies
$$C^{\log q/\log\log q}\ll(\log N)^{1/4}\quad\text{for any }q\leq\kappa/\sigma^{k+1}\text{.}$$
It can easily be checked that such an $\varepsilon$ can be found; indeed, since $\log q/\log\log q$ is increasing, the assumption that $q\ll\sigma^{-(k+1)}$ implies that
\begin{align*}
\frac{\log q}{\log\log q}&\ll\frac{(k+1)\log(\sigma^{-1})}{\log\log(\sigma^{-1})}+O(1)\\
&\leq\frac{(k+1)\varepsilon\log\log N\cdot\log\log\log N}{\log(\varepsilon)+\log\log\log N+\log\log\log\log N}+O(1)\leq\varepsilon\log\log N+O(1)
\end{align*}
if $N$ is large enough, which in turn yields
$$C^{\log q/\log\log q}\ll C^{(k+1)\varepsilon\log\log N}=(\log N)^{(k+1)\varepsilon\log(C)}$$
and so it suffices to choose $\varepsilon$ so that $(k+1)\varepsilon\log(C)\leq1/4$. This is, as a matter of fact, perhaps the part where the argument is the closest to breaking down; in order for this strategy to work we need to be able to obtain a small power of $\log N$ in our estimates in this Lemma, and assuming the negation of what we want to prove is ``just enough'' to obtain precisely that.

Then either $\sigma<(\log N)^{-\varepsilon\log\log\log N}$, or
$$S\ll(\log N)^{1/4}q^{-1/2}\min\{1,(N|\beta|)^{-1}\}\text{,}$$
thereby proving (i).
Now (ii) follows directly by applying (i) pointwise. Indeed, we have
\begin{align*}
\int_{\fM_{a,q}(N,\kappa/\sigma)}\left|\frac{1}{wN}\sum_{\substack{m=1\\n\in W_d(Y)}}^{M}h_d'(m)e\left(h_d(m)\gamma\right)\right|^2d\gamma&\ll(\log N)^{1/2}q^{-1}\int_0^\infty(\min\{1,(N|\beta|)^{-1}\})^2d\beta\\
&=(\log N)^{1/2}q^{-1}\left(\int_0^{1/N}1d\beta+\int_{1/N}^\infty(N|\beta|)^{-2}d\beta\right)\\
&=\frac{2(\log N)^{1/2}}{qN}\text{,}
\end{align*}
which implies what we wanted.
\end{proof}

\section{The density increment}
The purpose of this section is twofold. First we prove a variation (Lemma \ref{di}) of the $L^2$ density increment lemma used in \cite{BM}, adapted to our setup. Secondly we prove (Lemma \ref{rdi}) that if $A\subseteq[N]$ is a $h_d$-free set of density $\sigma$ then $\widehat{\mathbf{1}_A}-\sigma\widehat{\mathbf{1}_N}$ has big $L^2$ mass around certain rationals with small denominator, at which the Fourier coefficients of $A$ are large. We then put the two together in Lemma \ref{cor0}, where we establish that, in the absence of a good density increment, each denominator appears a relatively small number of times among the rational numbers in the conclusion of Lemma \ref{rdi}. This will pave the way to an application of the additive energy bound of Bloom and Maynard (Theorem 2 in \cite{BM}).

\begin{lemma}
\label{di}
Let $A\subseteq[N]$ have density $\sigma=|A|/N$, and suppose that $A$ is $h_d$-free. Let $\nu\in[0,1)$ and let $K,q\geq1$ be such that $\nu\sigma N\gg K\lambda(q)$, and suppose that
$$\sum_{\substack{1\leq a\leq q\\(a,q)=1}}\int_{\fM_{a,q}(N,K)}\left|\widehat{\mathbf{1}_{A}}(\gamma)-\sigma\widehat{\mathbf{1}_{[N]}}(\gamma)\right|^2d\gamma\geq\nu\sigma|A|\text{.}$$
Then there exists $N^\ast\gg\nu\sigma N/K\lambda(q)$ and a $h_{qd}$-free subset $A^\ast\subseteq[N^\ast]$, the density $\sigma^\ast=|A^\ast|/N^\ast$ of which satisfies
$$\sigma^\ast \geq\left(1+\frac{\nu}{73}\right)\sigma\text{.}$$
\end{lemma}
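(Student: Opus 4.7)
The plan is to convert the Fourier $L^2$ hypothesis into a physical-side density increment on an arithmetic progression of common difference $q$, then refine this to a sub-progression of common difference $\lambda(q)$, and finally verify that the rescaled set is $h_{qd}$-free. This mirrors the analogous density-increment lemma in \cite{BM}: the role played there by the identity $(qn)^2 = q^2 n^2$ is replaced here by Proposition \ref{rabo}, which encodes the intersective structure of $h$ through the auxiliary polynomials $h_d$.

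\textbf{Fourier-to-physical step.} Set $f = \mathbf{1}_A - \sigma\mathbf{1}_{[N]}$. For $\gamma = a/q + \beta$ with $|\beta| \leq K/N$ one has $\widehat{f}(\gamma) = \widehat{g_a}(\beta)$, where $g_a(n) = f(n)e(an/q)$. Extending the sum over $a$ coprime to $q$ to a sum over all residues modulo $q$ (a positive operation) and applying orthogonality of additive characters yields
$$\sum_{\substack{1\leq a\leq q\\(a,q)=1}}\int_{\fM_{a,q}(N,K)}|\widehat{f}(\gamma)|^2\,d\gamma \;\leq\; q\sum_{r=0}^{q-1}\int_{-K/N}^{K/N}\bigl|\widehat{f_r}(\beta)\bigr|^2\,d\beta,$$
where $f_r(n) = f(n)\mathbf{1}[n \equiv r \pmod{q}]$. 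A Fejér-type majorant converts the right-hand side into an $L^2$ discrepancy expression for $A$ on arithmetic progressions of common difference $q$ and length $\asymp N/K$. Feeding this into the Bloom--Maynard $L^2$ density increment (from which the specific constant $73$ is inherited) produces a progression $P$ of common difference $q$ and length $\asymp N/(Kq)$ with $|A \cap P| \geq (1 + \nu/73)\sigma|P|$.

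\textbf{Refinement and freeness.} Since $q \mid \lambda(q)$, partition $P$ into $\lambda(q)/q$ equally-sized sub-progressions of common difference $\lambda(q)$; pigeonhole selects one, say $P^\ast = \{r^\ast + \lambda(q)n : 1 \leq n \leq N^\ast\}$, on which $|A \cap P^\ast| \geq (1 + \nu/73)\sigma|P^\ast|$, with length $N^\ast \asymp N/(K\lambda(q)) \gg \nu\sigma N/(K\lambda(q))$ by the hypothesis $\nu\sigma N \gg K\lambda(q)$. Setting $A^\ast = \{n \in [N^\ast] : r^\ast + \lambda(q)n \in A\}$ gives $\sigma^\ast = |A^\ast|/N^\ast \geq (1 + \nu/73)\sigma$. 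If $a \neq b$ in $A^\ast$ satisfied $a - b = h_{qd}(m)$ for some positive integer $m$, then $r^\ast + \lambda(q)a$ and $r^\ast + \lambda(q)b$ would be distinct elements of $A$ whose difference, by the identity $\lambda(q)h_{qd}(m) = h_d(s + qm)$ (with $s + qm > 0$) established in the proof of Proposition \ref{rabo}, would lie in $h_d(\mathbb{N})$, contradicting the $h_d$-freeness of $A$.

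\textbf{Main obstacle.} The technical heart is the Bloom--Maynard $L^2$ density increment invoked above, as it is there that the explicit constant $1/73$ is extracted from the physical-side $L^2$ mass. Since this is a direct quotation of the corresponding step in \cite{BM}, the real work lies in verifying that the substitutions in our setting (the polynomial $h_d$ in place of $x^2$, and the passage from common difference $q$ to common difference $\lambda(q)$ via the refinement step) pass through that argument without altering any of the constants. The refinement step and the freeness verification, by contrast, are entirely routine once Proposition \ref{rabo} is in hand.
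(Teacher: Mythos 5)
Your overall architecture matches the paper: convert the Fourier mass into a density increment on a progression of common difference $q$, refine to a sub-progression of common difference $\lambda(q)$ by pigeonholing over residue classes, and verify $h_{qd}$-freeness via Proposition~\ref{rabo}. You have correctly identified the essential novelty relative to Bloom--Maynard, namely that $\lambda(q)h_{qd}(\mathbb{N})\subseteq h_d(\mathbb{N})$ plays the role of $(qn)^2=q^2n^2$, and that the passage from $q$ to $\lambda(q)$ is handled by a pigeonhole over $\lambda(q)/q$ residue classes. That part of your proposal is right.

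However, the middle of the argument has genuine gaps. Your ``Fourier-to-physical step'' (twisting $f$ by $e(an/q)$, orthogonality over all $a$, and a Fej\'er majorant) is a detour that you never complete and that does not match what you then invoke. The paper --- following Bloom--Maynard --- works directly with the function $\mathbf{1}_\cP$ for $\cP=\{q,2q,\ldots,N'q\}$: one shows $|\widehat{\mathbf{1}_\cP}(\gamma)|\geq N'/2$ on $\fM_q(N,K)$, applies Plancherel to bound $\|\mathbf{1}_\cP*g\|_2^2$ from below by $\nu\sigma N'^2|A|/4$, expands $\mathbf{1}_\cP*g=\mathbf{1}_\cP*\mathbf{1}_A-\sigma\mathbf{1}_\cP*\mathbf{1}_{[N]}$, estimates the cross term and the $\mathbf{1}_\cP*\mathbf{1}_{[N]}$ term with an error $O(\sigma qN'^3)$, and finishes with the $L^1$--$L^2$--$L^\infty$ chain. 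You defer all of this to ``the Bloom--Maynard $L^2$ density increment,'' but that lemma takes the Fourier hypothesis as its input, so your character-twist reformulation is not what it consumes; citing it after the reformulation is a non sequitur. More concretely, two things you state are incorrect as a consequence of skipping the calculation. First, the length of the resulting progression is not $\asymp N/(K\lambda(q))$: controlling the error term $O(\sigma qN'^3)$ forces the additional constraint $N'\ll\nu\sigma N/q$, so one only gets $N^\ast\gg\nu\sigma N/(K\lambda(q))$. This $\nu\sigma$ loss is where the hypothesis $\nu\sigma N\gg K\lambda(q)$ is actually used (to ensure $N^\ast\geq1$), and it feeds into the iteration count in the final proof, so it cannot be dropped. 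Second, the density increment lives on a translate: the quantity maximized is $\mathbf{1}_\cP*\mathbf{1}_{-A}(x)=|\cP\cap(A+x)|$, so $A^\ast$ must be defined via $\lambda(q)n+r-x\in A$ rather than $\lambda(q)n+r^\ast\in A$; your freeness verification then applies to $\lambda(q)a+r-x$ and $\lambda(q)b+r-x$.
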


\begin{proof}
Let
$$\cP=\{q,2q,\ldots,N'q\}$$
where $N'$ will be chosen later according to some constraints that we will find along the argument. Suppose $\gamma\in\fM_{a,q}(N,K)$, and observe that, for any $n'\in[N']$,
$$\left|1-e(\gamma qn')\right|\ll\|\gamma qn'\|=\|\gamma qn'-an'\|=\left\|q\left(\gamma-\frac{a}{q}\right)n'\right\|\leq\frac{qKN'}{N}\text{.}$$
The above implies that $\left|1-e(\gamma qn')\right|<1/2$ if $N'\ll N/qK$, for some sufficiently small value of the implied constant. Assuming this, we obtain that, for $\gamma\in\fM_{a,q}(N,K)$,
$$\left|N'-\widehat{\mathbf{1}_{\cP}}(\gamma)\right|=\left|\sum_{n\in\cP}(1-e(\gamma n))\right|\leq\frac{|\cP|}{2}=\frac{N'}{2}\text{,}\text{ and therefore }\left|\widehat{\mathbf{1}_{\cP}}(\gamma)\right|\geq\frac{N'}{2}\text{.}$$
Let $g=\mathbf{1}_A-\sigma\mathbf{1}_{[N]}$. By Plancherel's Theorem and the fact that $\widehat{\mathbf{1}_\cP*g}=\widehat{\mathbf{1}_\cP}\widehat{g}$,
\begin{align*}\|\mathbf{1}_\cP*g\|_2^2&=\int_\mathbb{T}\left|\widehat{\mathbf{1}_\cP}(\gamma)\right|^2\left|\widehat{g}(\gamma)\right|^2d\gamma\\
&\geq\int_{\fM_{q}(N,K)}\left|\widehat{\mathbf{1}_\cP}(\gamma)\right|^2\left|\widehat{g}(\gamma)\right|^2d\gamma\\
&\geq\frac{N'^2}{4}\int_{\fM_q(N,K)}\left|\widehat{g}(\gamma)\right|^2d\gamma\geq\frac{\nu\sigma N'^2|A|}{4}\text{.}
\end{align*}
Writing $\mathbf{1}_\cP*g=\mathbf{1}_\cP*\mathbf{1}_A-\sigma\mathbf{1}_\cP*\mathbf{1}_{[N]}$ and expanding the inner product, we arrive at
\begin{equation}
\label{expand}
\left\|\mathbf{1}_\cP*\mathbf{1}_A\right\|_2^2\geq\frac{\nu\sigma N'^2|A|}{4}+2\sigma\langle\mathbf{1}_\cP*\mathbf{1}_A,\mathbf{1}_\cP*\mathbf{1}_{[N]}\rangle-\sigma^2\left\|\mathbf{1}_\cP*\mathbf{1}_{[N]}\right\|_2^2\text{.}
\end{equation}
We now observe that $\langle\mathbf{1}_\cP*\mathbf{1}_A,\mathbf{1}_\cP*\mathbf{1}_{[N]}\rangle$ counts solutions $(p,a,p',n)\in\cP\times A\times\cP\times [N]$ to the equation $p+a=p'+n$. Rewriting the equation as $a=p'-p+n$, we see that
$$\langle\mathbf{1}_\cP*\mathbf{1}_A,\mathbf{1}_\cP*\mathbf{1}_{[N]}\rangle=\sum_{x\in A}\mathbf{1}_\cP*\mathbf{1}_{-\cP}*\mathbf{1}_{[N]}(x)\text{.}$$
It follows that
\begin{align*}
\left|\langle\mathbf{1}_\cP*\mathbf{1}_A,\mathbf{1}_\cP*\mathbf{1}_{[N]}\rangle-|A|N'^2\right|&=\left|\sum_{x\in A}(\mathbf{1}_\cP*\mathbf{1}_{-\cP}*\mathbf{1}_{[N]}(x)-N'^2\mathbf{1}_{[N]}(x))\right|\\
&\leq\sum_{x\in\mathbb{Z}}\left|\mathbf{1}_\cP*\mathbf{1}_{-\cP}*\mathbf{1}_{[N]}(x)-N'^2\mathbf{1}_{[N]}(x)\right|\\
&\leq\sum_{x\in\mathbb{Z}}\left|\sum_{y\in\mathbb{Z}}(\mathbf{1}_\cP*\mathbf{1}_{-\cP})(y)\mathbf{1}_{[N]}(x-y)-\sum_{y\in\mathbb{Z}}(\mathbf{1}_\cP*\mathbf{1}_{-\cP})(y)\mathbf{1}_{[N]}(x)\right|\\
&\leq\sum_{y\in\mathbb{Z}}(\mathbf{1}_\cP*\mathbf{1}_{-\cP})(y)\sum_{x\in\mathbb{Z}}\left|\mathbf{1}_{[N]}(x-y)-\mathbf{1}_{[N]}(x)\right|\\
&\ll\sum_{y\in\mathbb{Z}}(\mathbf{1}_\cP*\mathbf{1}_{-\cP})(y)|y|\\
&\ll qN'\sum_{y\in\mathbb{Z}}(\mathbf{1}_\cP*\mathbf{1}_{-\cP})(y)=qN'^3\text{.}
\end{align*}
It follows that $\langle\mathbf{1}_\cP*\mathbf{1}_A,\mathbf{1}_\cP*\mathbf{1}_{[N]}\rangle=|A|N'^2+O(qN'^3)$. On the other hand, since $\mathbf{1}_\cP*\mathbf{1}_{[N]}(x)$ counts solutions $(p,n)\in\cP\times[N]$ to $p+n=x$, and is hence bounded by $|\cP|=N'$, we have
$$\left\|\mathbf{1}_\cP*\mathbf{1}_{[N]}\right\|_2^2\leq N'\left\|\mathbf{1}_\cP*\mathbf{1}_{[N]}\right\|_1=N'^2N=\frac{|A|N'^2}{\sigma}\text{.}$$
Using these estimates in \eqref{expand}, we arrive at
$$\left\|\mathbf{1}_\cP*\mathbf{1}_A\right\|_2^2\geq \sigma|A|N'^2+\frac{\nu\sigma N'^2|A|}{4}+O(\sigma qN'^3)\text{.}$$
If $N'\ll \nu\sigma N/q$ for some sufficiently small value of the implied constant, it follows that the term $O(\sigma q N'^3)$ is at most $\nu\sigma N'^2|A|/8$, and therefore
$$\left\|\mathbf{1}_\cP*\mathbf{1}_A\right\|_2^2\geq\left(1+\frac{\nu}{73}\right)\sigma N'^2|A|\text{.}$$
We now observe that $\left\|\mathbf{1}_\cP*\mathbf{1}_A\right\|_2^2$ counts solutions $(p,a,p',a')\in\cP\times A\times\cP\times A$ to $p+a=p'+a'$. Rewriting the equation as $p-a'=p'-a$, we see that $\left\|\mathbf{1}_\cP*\mathbf{1}_A\right\|_2^2=\left\|\mathbf{1}_\cP*\mathbf{1}_{-A}\right\|_2^2$. We therefore obtain
$$\left\|\mathbf{1}_\cP*\mathbf{1}_{-A}\right\|_1\cdot\left\|\mathbf{1}_\cP*\mathbf{1}_{-A}\right\|_\infty\geq \left\|\mathbf{1}_\cP*\mathbf{1}_{-A}\right\|_2^2\geq\left(1+\frac{\nu}{73}\right)\sigma N'^2|A|\text{.}$$
On the other hand, it is clear that $\left\|\mathbf{1}_\cP*\mathbf{1}_{-A}\right\|_1=|\cP|\times|A|=N'|A|$, and therefore we conclude that
$$\left\|\mathbf{1}_\cP*\mathbf{1}_{-A}\right\|_\infty\geq\left(1+\frac{\nu}{73}\right)\sigma N'\text{.}$$
Letting $x$ be an integer at which $\mathbf{1}_\cP*\mathbf{1}_{-A}$ attains its maximum, this means precisely that $|\cP\cap (A+x)|\geq(1+\nu/73)\sigma N'$.

We recall that the constraints on $N'$ we found along the way, in order to obtain the conclusion above, were that $N'\ll N/qK$ and $N'\ll\nu\sigma N/q$. It is therefore clear that we can find $N'\gg \nu\sigma N/qK$ for which the conclusion holds. Now partition $[N']$ into residue classes modulo $\lambda(q)/q$. Each class contains $\gg \nu\sigma N/\lambda(q)K$ elements, and by the Pigeonhole Principle there is a class $C$, with $|C|=N^\ast$, such that
$$|qC\cap (A+x)|\geq\left(1+\frac{\nu}{73}\right)\sigma N^\ast\text{.}$$
Let $qC=\{\lambda(q)n+r:1\leq n\leq N^\ast\}$ for some $r\in\mathbb{Z}$, and let
$$A^\ast=\{n\in[N^\ast]:\lambda(q)n+r\in A+x\}\text{.}$$
Note that $|A^\ast|=|qC\cap (A+x)|$. So it remains only to prove that $A^\ast$ is $h_{dq}$-free. Assume it is not, so that there exist $a\neq b$ in $A^\ast$ and a positive integer $n$ such that $a-b=h_{dq}(n)$. Then $\lambda(q)a+r-x$ and $\lambda(q)b+r-x$ are in $A$, and
$$(\lambda(q)a+r-x)-(\lambda(q)b+r-x)=\lambda(q)(a-b)=\lambda(q)h_{dq}(n)\in h_d(\mathbb{N})\text{,}$$
by Proposition \ref{rabo}, contradicting the assumption that $A$ is $h_d$-free. This finishes the proof.
\end{proof}

\begin{lemma}
\label{rdi}
Let $N\gg_h 1$, and suppose $A\subseteq[N]$ is $h_d$-free, where $d\leq N^{\rho}$, with $\sigma=|A|/N>(\log N)^{-\log\log\log N}$. Then there exist $B, Q$, with $\sigma^{O(1)}\ll B\ll\sigma^{-O(1)}$ and $1\leq Q\ll\sigma^{-O(1)}$, and a set
$$\Gamma\subseteq\{(a,q):q\leq Q,1\leq a\leq q,\mathrm{gcd}(a,q)=1\}$$
such that:
\begin{enumerate}[label=(\roman*)]
\item There is a choice of frequencies $\gamma_{a,q}\in\fM_{a,q}(N,\kappa/\sigma)$ for each $(a,q)\in\Gamma$ for which
$$\sum_{(a,q)\in\Gamma}|\widehat{\mathbf{1}_A}(\gamma_{a,q})|\gg\frac{B|A|Q^{1/2}}{(\log N)^{1/4}(\log(1/\sigma))^2}\text{;}$$
\item Setting $g=\mathbf{1}_A-\sigma\mathbf{1}_{[N]}$, we have, for each $(a,q)\in\Gamma$,
$$\int_{\fM_{a,q}(N,\kappa/\sigma)}|\widehat{g}(\gamma)|^2d\gamma\gg\frac{\sigma|A|}{B^2}\text{.}$$
\end{enumerate}
\end{lemma}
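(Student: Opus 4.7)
My plan is to combine the $h_d$-freeness of $A$ with a Plancherel-type identity to locate large $L^2$-mass of $\widehat{g}$ near rationals of small denominator, and then to convert this $L^2$-mass into the pointwise Fourier coefficients required by~(i) via Cauchy--Schwarz, using both parts of Lemma~\ref{2ndc} in complementary ways. Set
\[
F(\gamma)=\frac{1}{wN}\sum_{\substack{m=1\\m\in W_d(Y)}}^{M}h_d'(m)\,e(h_d(m)\gamma), \qquad g=\mathbf{1}_A-\sigma\mathbf{1}_{[N]}.
\]
The identity $\int_{\bT}|\widehat{\phi}(\gamma)|^2 F(-\gamma)\,d\gamma=\sum_m\frac{h_d'(m)\mathbf{1}_{[1,M]\cap W_d(Y)}(m)}{wN}(\phi\ast\widetilde\phi)(h_d(m))$, applied with $\phi=\mathbf{1}_A$, collapses by the $h_d$-free hypothesis to $O_h(|A|/(wN))$ (only the finitely many integer zeros of $h_d$ contribute). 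Applied with $\phi=\mu:=\sigma\mathbf{1}_{[N]}$, the change of variables $u=h_d(x)$ in the resulting Riemann sum gives $\int_{\bT}|\widehat{\mu}|^2F(-\gamma)\,d\gamma\sim\sigma|A|/2$. Expanding $|\widehat{\mathbf{1}_A}|^2=|\widehat g|^2+2\mathrm{Re}(\widehat g\overline{\widehat\mu})+|\widehat\mu|^2$ and controlling the cross term by Cauchy--Schwarz (using the easy bound $\int|\widehat\mu|^2|F|\le\sigma|A|$) then yields
\[
\int_{\bT}|\widehat g(\gamma)|^2\,|F(\gamma)|\,d\gamma\gg\sigma|A|.
\]

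Next I would decompose $\bT$ into the major arcs $\fM:=\fM(N,\kappa/\sigma,\kappa/\sigma^{k+1})$ and their complement. Lemma~\ref{1stc} guarantees $|F|\le r\sigma$ uniformly on the minor arcs (choosing $\kappa$ large in terms of a small $r$), so their contribution is at most $r\sigma\|\widehat g\|_2^2\le r\sigma|A|$, negligible. On the major arcs, Lemma~\ref{2ndc}(i) supplies $|F(\gamma)|\ll(\log N)^{1/4}q^{-1/2}$ on $\fM_{a,q}(N,\kappa/\sigma)$, whence
\[
\sigma|A|\ll(\log N)^{1/4}\sum_{\substack{q\le\kappa/\sigma^{k+1}\\(a,q)=1}}q^{-1/2}\,I_{a,q},\qquad I_{a,q}:=\int_{\fM_{a,q}(N,\kappa/\sigma)}|\widehat g(\gamma)|^2\,d\gamma.
\]
A double dyadic pigeonhole in $q\sim Q$ and in $I_{a,q}\sim\sigma|A|/B^2$ (each ranging through $O(\log\sigma^{-1})$ dyadic classes, since $q\ll\sigma^{-(k+1)}$ and $I_{a,q}\le|A|$) produces parameters $Q\ll\sigma^{-O(1)}$ and $\sigma^{O(1)}\ll B\ll\sigma^{-O(1)}$, together with a set $\Gamma$ of coprime pairs with $q\sim Q$ for which $I_{a,q}\gg\sigma|A|/B^2$ (which is~(ii)) and $|\Gamma|\gg B^2Q^{1/2}/((\log N)^{1/4}(\log\sigma^{-1})^2)$.

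To produce the frequencies $\gamma_{a,q}$ for~(i), I would set $T_{a,q}:=\sup_{\gamma\in\fM_{a,q}(N,\kappa/\sigma)}|\widehat g(\gamma)|$ and apply Cauchy--Schwarz to $J_{a,q}:=\int_{\fM_{a,q}(N,\kappa/\sigma)}|\widehat g|^2|F|\,d\gamma$ in the form
\[
J_{a,q}\le T_{a,q}\sqrt{I_{a,q}}\,\|F\|_{L^2(\fM_{a,q}(N,\kappa/\sigma))}.
\]
The $L^2$-estimate from Lemma~\ref{2ndc}(ii) then gives $T_{a,q}\gg J_{a,q}(qN)^{1/2}/((\log N)^{1/4}\sqrt{I_{a,q}})$. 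Tracking the pigeonholed lower bound on the $\Gamma$-sum of $J_{a,q}$ and substituting the dyadic value $I_{a,q}\sim\sigma|A|/B^2$, I expect to extract a choice $\gamma_{a,q}\in\fM_{a,q}(N,\kappa/\sigma)$ with $|\widehat g(\gamma_{a,q})|\gg|A|/B$. For $q\ge 2$ the estimate $|\widehat\mu(\gamma_{a,q})|\ll\sigma q\le\sigma Q$ is dwarfed by $|A|/B$, so $|\widehat{\mathbf{1}_A}(\gamma_{a,q})|\gg|A|/B$, and summation over $\Gamma$ delivers~(i). The main obstacle will be this last extraction: naive $L^2$-averaging of $I_{a,q}$ on the wide arc of length $1/(\sigma N)$ only produces $|\widehat g|\gg\sigma^{1/2}|A|/B$, a factor of $\sigma^{1/2}$ too weak; the crucial role of Lemma~\ref{2ndc}(ii) is precisely to provide enough cancellation in $F$ so that the Cauchy--Schwarz splitting above recovers this missing factor.
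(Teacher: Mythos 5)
Your strategy is genuinely different from the paper's: the paper works directly with the mixed integral $\int|\widehat{g}\,\widehat{\mathbf{1}_A}\,S|$, splitting the three factors by Cauchy--Schwarz as $(\max|\widehat{\mathbf{1}_A}|)\cdot\|\widehat{g}\|_{L^2(\fM_{a,q})}\cdot\|S\|_{L^2(\fM_{a,q})}$ on each arc, so that $\max|\widehat{\mathbf{1}_A}|$ appears from the start and (i) drops out of the dyadic pigeonhole with no further translation step. You instead work with $\int|\widehat{g}|^2|F|$ and then try to recover $\widehat{\mathbf{1}_A}$ from $\widehat{g}$ via $\widehat{\mathbf{1}_A}=\widehat{g}+\widehat{\mu}$. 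Your derivation of the lower bound $\int|\widehat{g}|^2|F|\gg\sigma|A|$ by expanding $|\widehat{\mathbf{1}_A}|^2$ and absorbing the cross term with Cauchy--Schwarz is sound and arguably cleaner than the paper's case split on $|A\cap[N/2,N]|$ versus $|A\cap[1,N/2]|$, and your closing remark correctly identifies the role of Lemma~\ref{2ndc}(ii) in recouping the $\sigma^{1/2}$ lost to naive $L^2$ averaging. However, as written, two steps do not go through.

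First, the pigeonhole. You pigeonhole the inequality $\sigma|A|\ll(\log N)^{1/4}\sum q^{-1/2}I_{a,q}$, and record lower bounds on $|\Gamma|$ and on each $I_{a,q}$. But your next move requires a \emph{lower} bound on $J_{a,q}=\int_{\fM_{a,q}}|\widehat{g}|^2|F|\,d\gamma$ (either individually or summed over $\Gamma$), because the Cauchy--Schwarz inequality $J_{a,q}\le T_{a,q}\sqrt{I_{a,q}}\,\|F\|_{L^2(\fM_{a,q})}$ only turns into a lower bound on $T_{a,q}$ when combined with one on $J_{a,q}$. The bound from Lemma~\ref{2ndc}(i) that you used to pass from $J_{a,q}$ to $I_{a,q}$ runs the wrong way: it says $J_{a,q}\ll(\log N)^{1/4}q^{-1/2}I_{a,q}$, so knowing $I_{a,q}\sim\sigma|A|/B^2$ tells you nothing from below about $J_{a,q}$. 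You need instead to pigeonhole the inequality $\sum_{a,q}J_{a,q}\gg\sigma|A|$ directly (discarding pairs with $I_{a,q}$ small, which forces $J_{a,q}$ small since $\|F\|_\infty\le1$), placing $q$ and $I_{a,q}$ in dyadic classes; this preserves $\sum_{(a,q)\in\Gamma}J_{a,q}\gg\sigma|A|/(\log(1/\sigma))^2$, and then you should sum the Cauchy--Schwarz inequality over $\Gamma$ rather than try to extract per-element bounds $|\widehat{g}(\gamma_{a,q})|\gg|A|/B$, which are not available without a third dyadic decomposition.

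Second, the translation from $\widehat{g}$ to $\widehat{\mathbf{1}_A}$. Your estimate $|\widehat{\mu}(\gamma_{a,q})|\ll\sigma q$ is valid only for $q\ge2$; on $\fM_{1,1}$ the quantity $|\widehat{\mu}(\gamma)|$ can be as large as $\sigma N=|A|$, which is \emph{not} dwarfed by $|A|/B$. Since the pigeonhole can perfectly well land in the dyadic class $Q=1$, in which case $\Gamma=\{(1,1)\}$, this case must be handled separately: the simplest fix is to take $\gamma_{1,1}=0$, observe $\widehat{\mathbf{1}_A}(0)=|A|$, and check (using the $|\Gamma|$ lower bound, which forces $B\ll(\log N)^{1/8}\log(1/\sigma)$ when $|\Gamma|=1$) that (i) holds trivially. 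The paper sidesteps both issues by carrying $\max|\widehat{\mathbf{1}_A}|$ through the argument from the outset, which is worth keeping in mind as the more robust bookkeeping choice.
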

\begin{proof}
Let $M=\left\lfloor (N/b_d)^{1/k}\right\rfloor$. For $\gamma\in\mathbb{T}$, define
$$S(\gamma)=\frac{1}{wN}\sum_{\substack{m=1\\m\in W_d(Y)}}^Mh_d'(m)e(h_d(m)\gamma)\text{.}$$
We then observe that, by orthogonality,
$$\int_\bT\widehat{\textbf{1}_A}(\gamma)\overline{\widehat{\textbf{1}_A}(\gamma)}S(\gamma)d\gamma=\frac{1}{wN}\sum_{\substack{1\leq m\leq M\\m\in W_d(Y)\\a,b\in A\\b-a=h_d(m)}}h_d'(m)\text{.}$$
The assumption that $A$ is $h_d$-free implies that only the terms with $h_d(m)=0$ (and hence $a=b$) contribute to the sum above. Note that $h_d(m)=0$ implies, by definition of $h_d$, that $h(r_d+dm)=0$; since $h$ has only finitely many zeros we must have $r_d+dm\ll_h 1$, and since $|r_d|<d$ by assumption, it follows that $m\ll_h 1$. Since $b_d\leq d^k<N^{k\rho}$, it follows that each term $h_d'(m)$ in the sum above contributes at most $O_h(N^{k\rho})$, and hence
\begin{equation}
\label{ort}
\int_\bT\widehat{\textbf{1}_A}(\gamma)\overline{\widehat{\textbf{1}_A}(\gamma)}S(\gamma)d\gamma=\frac{1}{wN}\sum_{\substack{1\leq m\leq M\\m\in W_d(Y)\\a,b\in A\\b-a=h_d(m)}}h_d'(m)=O_h\left(\frac{1}{wN^{1-k\rho}}\right)\text{.}
\end{equation}

Recall that $g=\mathbf{1}_A-\sigma\mathbf{1}_{[N]}$. We claim that
\begin{equation}
\label{part}\int_{\bT}|\widehat{g}(\gamma)\widehat{\mathbf{1}_A}(\gamma)S(\gamma)|d\gamma\gg\sigma|A|\text{.}
\end{equation}
To see this, we split into two cases, according to whether $|A\cap[N/2,N]|>|A|/2$ or $|A\cap[1,N/2]|>|A|/2$. Assuming the former, we use \eqref{ort} to derive that
\begin{align*}
-\int_{\bT}\widehat{g}(\gamma)\overline{\widehat{\mathbf{1}_A}(\gamma)}S(\gamma)d\gamma&=\sigma\int_\bT\mathbf{1}_{[N]}(\gamma)\overline{\widehat{\mathbf{1}_A}(\gamma)}S(\gamma)d\gamma-O\left(\frac{1}{wN^{1-k\rho}}\right)\\
&=\frac{\sigma}{wN}\sum_{\substack{m\in W_d(Y)\cap[1,M]\\a\in[N],b\in A\\a-b+h_d(m)=0}}h_d'(m)-O\left(\frac{1}{wN^{1-k\rho}}\right)\text{.}\\
\end{align*}
We now estimate the contribution to the above sum of those values of $m$ for which $h_d'(m)<0$. Recalling the definition of $h_d$, we see that
$$h_d'(m)=\frac{d}{\lambda(d)}h'(r_d+dm)\text{.}$$
Since $\lambda(d)\geq d$, $|r_d|<d$ and the leading coefficient of $h_d$ is assumed to be positive, the condition $h_d'(m)<0$ implies that $m\ll_h 1$ and also that $|h_d'(m)|\ll_h 1$. Therefore the values of $m$ for which $h_d'(m)<0$ subtract at most $O_h(|A|)=O_h(\sigma N)$ from the sum above, and hence
\begin{align*}-\int_{\bT}\widehat{g}(\gamma)\overline{\widehat{\mathbf{1}_A}(\gamma)}S(\gamma)d\gamma&=\frac{\sigma}{wN}\sum_{\substack{m\in W_d(Y)\cap[1,M]\\a\in[N],b\in A\\a-b+h_d(m)=0\\h_d'(m)>0}}h_d'(m)-O\left(\frac{\sigma^2}{w}\right)-O\left(\frac{1}{wN^{1-k\rho}}\right)\text{.}\\
&=\frac{\sigma}{wN}\sum_{\substack{m\in W_d(Y)\cap[1,M]\\a\in[N],b\in A\\a-b+h_d(m)=0\\h_d'(m)\geq0}}h_d'(m)-O\left(\frac{\sigma^2}{w}\right)\text{.}
\end{align*}
Now all the summands are non-negative, and it is clear that if $M\ll m\ll M$ and $b\in A\cap[N/2,N]$ then $b-h_d(m)\in[N]$, for some value of the implied constants. This means that at least $w\sigma|A|M$ summands are positive (here we are using Lemma \ref{sievelemma}), and the contribution of each of those is $\gg b_dM^{k-1}$. Therefore,
$$-\int_{\bT}\widehat{g}(\gamma)\overline{\widehat{\mathbf{1}_A}(\gamma)}S(\gamma)d\gamma\gg\frac{1}{wN}\cdot w\sigma |A|M\cdot b_dM^{k-1}-O\left(\frac{\sigma^2}{w}\right)\gg\sigma |A|\text{.}$$
This proves \eqref{part}, under the assumption that $|A\cap[N/2,N]|>|A|/2$. If instead $|A\cap[1,N/2]|>|A|/2$, we derive in a completely analogous way that
$$-\int_{\bT}\widehat{\mathbf{1}_A}(\gamma)\overline{\widehat{g}(\gamma)}S(\gamma)d\gamma\geq \frac{\sigma}{wN}\sum_{\substack{m\in W_d(Y)\cap[1,M]\\a\in A,b\in [N]\\a-b+h_d(m)=0\\h_d'(m)\geq0}}h_d'(m)-O\left(\frac{\sigma^2}{w}\right)\text{.}$$
Now, if $M\ll m\ll M$ and $a\in A\cap[1,N/2]$, for some value of the implied constants, then $a+h_d(m)\in[N]$, and we derive as before that
$$-\int_{\bT}\widehat{\mathbf{1}_A}(\gamma)\overline{\widehat{g}(\gamma)}S(\gamma)d\gamma\gg\sigma |A|\text{,}$$
establishing \eqref{part} in this case as well.

We now claim that there exists a constant $\kappa>0$ such that \eqref{part} remains true if we integrate only over the major arcs $\fM(N,\kappa/\sigma,\kappa/\sigma^{k+1})$, i.e., such that
\begin{equation}
\label{partw}
\int_{\fM(N,\kappa/\sigma,\kappa/\sigma^{k+1})}|\widehat{g}(\gamma)\widehat{\mathbf{1}_A}(\gamma)S(\gamma)|d\gamma\gg\sigma|A|\text{.}
\end{equation}
To prove this, let $r$ be any positive real number such that $2r$ is smaller than the implied constant in \eqref{part}. We apply Lemma \ref{1stc}, yielding $\kappa>0$ such that
$$|S(\gamma)|\leq r\sigma\text{ whenever }\gamma\in\fm\left(N,\frac{\kappa}{\sigma},\frac{\kappa}{\sigma^{k+1}}\right)\text{.}$$
This implies
\begin{align*}
\int_{\fm(N,\kappa/\sigma,\kappa/\sigma^{k+1})}|\widehat{g}(\gamma)\widehat{\mathbf{1}_A}(\gamma)S(\gamma)|d\gamma&\leq r\sigma\int_{\bT}|\widehat{g}(\gamma)\widehat{\mathbf{1}_A}(\gamma)|d\gamma\\
&\leq r\sigma\left(\int_{\bT}|\widehat{\mathbf{1}_A}(\gamma)|^2d\gamma+\sigma\int_{\bT}|\widehat{\mathbf{1}_{[N]}}(\gamma)\widehat{\mathbf{1}_A}(\gamma)|d\gamma\right)\text{.}
\end{align*}
By orthogonality we have
$$\int_\bT|\widehat{\mathbf{1}_A}(\gamma)|^2d\gamma=|A|\text{.}$$
On the other hand, by the Cauchy-Schwarz inequality and orthogonality,
\begin{align*}\int_{\bT}|\widehat{\mathbf{1}_{[N]}}(\gamma)\widehat{\mathbf{1}_A}(\gamma)|d\gamma&\left(\int_{\bT}|\widehat{\mathbf{1}_{[N]}}(\gamma)|^2d\gamma\right)^{1/2}\left(\int_\bT|\widehat{\mathbf{1}_A}(\gamma)|^2d\gamma\right)^{1/2}\\
&=(|A|N)^{1/2}\leq N\text{.}
\end{align*}
We therefore obtain that
$$\int_{\fm(N,\kappa/\sigma,\kappa/\sigma^{k+1})}|\widehat{g}(\gamma)\widehat{\mathbf{1}_A}(\gamma)S(\gamma)|d\gamma\leq 2r\sigma |A|\text{,}$$
which together with \eqref{part}, and recalling the choice of $r$, implies \eqref{partw}.

Now \eqref{partw} can be rewritten as
$$\sum_{q\leq\kappa/\sigma^{k+1}}{\sum_{1\leq a\leq q}}^\ast\int_{\fM_{a,q}(N,\kappa/\sigma)}|\widehat{g}(\gamma)\widehat{\mathbf{1}_A}(\gamma)S(\gamma)|d\gamma\gg\sigma|A|$$
and this implies by the second part of Lemma \ref{2ndc} and Cauchy-Schwarz that
\begin{equation}
\label{ny}
\sum_{q\leq\kappa/\sigma^{k+1}}{\sum_{1\leq a\leq q}}^\ast\frac{1}{q^{1/2}}\left(\underset{\gamma\in\fM_{a,q}(N,\kappa/\sigma)}{\max}\widehat{\mathbf{1}_A(\gamma)}\right)\left(\int_{\fM_{a,q}(N,\kappa/\sigma)}|\widehat{g}(\gamma)|^2d\gamma\right)^{1/2}\gg\frac{\sigma|A|N^{1/2}}{(\log N)^{1/4}}\text{.}
\end{equation}
We observe that those pairs $(a,q)$ for which
$$\int_{\fM_{a,q}(N,\kappa/\sigma)}|\widehat{g}(\gamma)|^2d\gamma\leq\frac{\sigma^{3k+5}N}{\log N}$$
contribute at most
$$\sum_{q\leq\kappa/\sigma^{k+1}}q^{1/2}|A|\frac{\sigma^{(3k+5)/2}N^{1/2}}{(\log N)^{1/2}}\leq\left(\frac{\kappa}{\sigma^{k+1}}\right)^{3/2}|A|\frac{\sigma^{(3k+5)/2}N^{1/2}}{(\log N)^{1/2}}\ll\frac{\sigma|A|N^{1/2}}{(\log N)^{1/2}}\text{.}$$
Here we used the trivial inequality $\widehat{\mathbf{1}_A(\gamma)}\leq|A|$. It follows that the main contribution to \eqref{ny} comes from those pairs $(a,q)$ in the set
$$\Omega=\left\{(a,q):1\leq q\leq\kappa/\sigma^{k+1},1\leq a\leq q,\mathrm{gcd}(a,q)=1, \int_{\fM_{a,q}(N,\kappa/\sigma)}|\widehat{g}(\gamma)|^2d\gamma>\frac{\sigma^{3k+5}N}{\log N}\right\}\text{,}$$
or in other words,
$$\sum_{(a,q)\in\Omega}\frac{1}{q^{1/2}}\left(\underset{\gamma\in\fM_{a,q}(N,\kappa/\sigma)}{\max}\widehat{\mathbf{1}_A(\gamma)}\right)\left(\int_{\fM_{a,q}(N,\kappa/\sigma)}|\widehat{g}(\gamma)|^2d\gamma\right)^{1/2}\gg\frac{\sigma|A|N^{1/2}}{(\log N)^{1/4}}\text{.}$$
We note that $|\widehat{g}(\gamma)|\leq 2N$ for every $\gamma\in\mathbb{T}$, and since the arc $\fM_{a,q}(N,\kappa/\sigma)$ has length $O(1/\sigma)$, we have
$$\frac{\sigma^{(3k+5)/2}N^{1/2}}{(\log N)^{1/2}}<\left(\int_{\fM_{a,q}(N,\kappa/\sigma)}|\widehat{g}(\gamma)|^2d\gamma\right)^{1/2}\ll\frac{N^{1/2}}{\sigma^{1/2}}\quad\text{for every $(a,q)\in\Omega$.}$$
Since $\log N=O(1/\sigma)$, the range determined by the previous inequalities can be covered by $O(\log(1/\sigma))$ intervals of the form $[\sigma N^{1/2}/B,2\sigma N^{1/2}/B]$, for some values of $B$ with $\sigma^{O(1)}\ll B\ll\sigma^{-O(1)}$. Similarly, the interval $[1,\kappa/\sigma^{k+1}]$ can be covered by $O(\log(1/\sigma))$ intervals of the form $[Q,2Q]$, with $Q\ll\sigma^{-O(1)}$. By the Pigeonhole Principle, it follows that there exists a subset $\Gamma\subseteq\Omega$ and values of $B,Q$ with the indicated properties such that, if $(a,q)\in\Gamma$, then $Q\leq q\leq 2Q$ and
$$\sigma N^{1/2}/B\leq \left(\int_{\fM_{a,q}(N,\kappa/\sigma)}|\widehat{g}(\gamma)|^2d\gamma\right)^{1/2}\leq2\sigma N^{1/2}/B\text{,}$$
and moreover
$$\sum_{(a,q)\in\Gamma}\frac{1}{q^{1/2}}\left(\underset{\gamma\in\fM_{a,q}(N,\kappa/\sigma)}{\max}\widehat{\mathbf{1}_A(\gamma)}\right)\left(\int_{\fM_{a,q}(N,\kappa/\sigma)}|\widehat{g}(\gamma)|^2d\gamma\right)^{1/2}\gg\frac{\sigma|A|N^{1/2}}{(\log N)^{1/4}(\log(1/\sigma))^2}\text{.}$$
The previous two inequalities put together imply that
$$\frac{1}{Q^{1/2}}\sum_{(a,q)\in\Gamma}\left(\underset{\gamma\in\fM_{a,q}(N,\kappa/\sigma)}{\max}\widehat{\mathbf{1}_A(\gamma)}\right)\frac{\sigma N^{1/2}}{B}\gg\frac{\sigma|A|N^{1/2}}{(\log N)^{1/4}(\log(1/\sigma))^2}\text{,}$$
which simplifies to the desired result.
\end{proof}

We will now deduce that, if the density increment condition of Lemma \ref{di} does not hold, so that the $L^2$ mass of $\widehat{g}$ around rationals with small denominators is very small, then the set $\Gamma$ constructed in Lemma \ref{rdi} has few pairs $(a,q)$ with a given value of $q$.

\begin{lemma}
\label{cor0}
Keep the assumptions of Lemma \ref{rdi}, and let $\nu\geq N^{-1/2}$. Then one of the following holds:
\begin{enumerate}[label=(\roman*)]
\item There exists $N^\ast\gg\nu\sigma^{O(1)}N$, an integer $q\ll\sigma^{-O(1)}$ and a $h_{qd}$-free subset $A^\ast\subseteq[N^\ast]$ with density
$$\sigma^\ast\geq\left(1+\frac{\nu}{73}\right)\sigma\text{;}$$
\item There exist $B$, $Q$ and a set $\Gamma$ as in the conclusion of Lemma \ref{rdi} with the additional property that, for every $q\leq Q$,
$$|\{a:(a,q)\in\Gamma\}|\ll\nu B^2\text{.}$$
\end{enumerate}
\end{lemma}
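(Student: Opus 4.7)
The plan is to derive this lemma as a direct dichotomy obtained by feeding the conclusion of Lemma \ref{rdi} into the density increment Lemma \ref{di}, pigeonholing on the denominator $q$. First I would apply Lemma \ref{rdi} to produce parameters $B$ and $Q$ and the set $\Gamma$ satisfying properties (i) and (ii) of that lemma. If the conclusion (ii) of the present lemma already holds for these $B, Q, \Gamma$, we are done, so the only interesting case is when there exists some $q_0 \leq Q$ for which
$$|\{a : (a, q_0) \in \Gamma\}| \geq C \nu B^2$$
with $C$ a sufficiently large absolute constant (to be fixed by the implied constants in Lemma \ref{rdi}(ii)).

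In that case I would invoke Lemma \ref{rdi}(ii) and sum the $L^2$ bound over those $a$ with $(a, q_0) \in \Gamma$, obtaining
$$\sum_{\substack{1 \leq a \leq q_0 \\ (a, q_0) = 1}} \int_{\fM_{a,q_0}(N, \kappa/\sigma)} \left|\widehat{g}(\gamma)\right|^2 d\gamma \gg C\nu B^2 \cdot \frac{\sigma|A|}{B^2} \asymp C \nu \sigma |A|,$$
so that, choosing $C$ large enough, the hypothesis of Lemma \ref{di} is satisfied with $q = q_0$ and $K = \kappa/\sigma$. Before applying that lemma I must verify its size condition $\nu \sigma N \gg K\lambda(q_0)$. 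Since $q_0 \leq Q \ll \sigma^{-O(1)}$ and $\lambda(q_0) \leq q_0^k \ll \sigma^{-O(1)}$, we have $K\lambda(q_0) \ll \sigma^{-O(1)}$; because $\sigma \geq (\log N)^{-c\log\log\log N}$ exceeds every fixed negative power of $N$, the required inequality reduces to $\nu N \gg \sigma^{-O(1)}$, which is easily implied by the standing hypothesis $\nu \geq N^{-1/2}$.

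Lemma \ref{di} then yields an integer
$$N^\ast \gg \frac{\nu \sigma N}{K \lambda(q_0)} = \frac{\nu \sigma^2 N}{\kappa \lambda(q_0)} \gg \nu \sigma^{O(1)} N$$
and a $h_{q_0 d}$-free subset $A^\ast \subseteq [N^\ast]$ of density at least $(1 + \nu/73)\sigma$, giving exactly conclusion (i) with $q = q_0 \ll \sigma^{-O(1)}$. I do not expect any genuine obstacle: the argument is essentially a clean bookkeeping step, with the main points being (a) matching the output of Lemma \ref{rdi}(ii) to the hypothesis of Lemma \ref{di} by summing over $a$ for a single pigeonholed $q$, and (b) checking that the lower bound $\nu \geq N^{-1/2}$ is enough to beat the polynomial (in $\sigma^{-1}$) loss coming from $\lambda(q_0)$.
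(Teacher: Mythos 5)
Your argument is correct and is essentially the paper's argument in contrapositive form: the paper assumes conclusion (i) fails, invokes Lemma \ref{di} to get an upper bound $\nu\sigma|A|$ on the $L^2$ mass near each rational with small denominator, and then compares with the per-arc lower bound from Lemma \ref{rdi}(ii) to deduce (ii), whereas you assume (ii) fails at some $q_0$, sum the Lemma \ref{rdi}(ii) lower bounds over the arcs at $q_0$ to trigger the hypothesis of Lemma \ref{di}, and deduce (i). Your verification of the side condition $\nu\sigma N\gg K\lambda(q_0)$ via $\lambda(q_0)\le q_0^k\ll\sigma^{-O(1)}$, $\nu\ge N^{-1/2}$, and the standing assumption on $\sigma$ is the same check the paper waves through as holding ``automatically.''
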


\begin{proof}
We assume that condition (i) does not hold; note that, given that $\lambda(q)\leq q^d$, the condition $\nu\sigma N\gg K\lambda(q)$ holds automatically whenever $q,K\ll\sigma^{-O(1)}$, so that Lemma \ref{di} implies that, for such $q$ and $K$,
$$\sum_{\substack{1\leq a\leq q\\(a,q)=1}}\int_{\fM_{a,q}(N,K)}\left|\widehat{g}(\gamma)\right|^2d\gamma<\nu\sigma|A|\text{.}$$
We now consider values $B$ and $Q$ and a set $\Gamma$ as in the conclusion of Lemma \ref{rdi}, and for a value of $q\leq Q$ we add up the inequality
$$\int_{\fM_{a,q}(N,\kappa/\sigma)}|\widehat{g}(\gamma)|^2d\gamma\gg\frac{\sigma|A|}{B^2}$$
over all values of $a$ for which $(a,q)\in\Gamma$. This yields
$$|\{a:(a,q)\in\Gamma\}|\frac{\sigma|A|}{B^2}\ll\nu\sigma|A|\text{,}$$
which simplifies to (ii).
\end{proof}

\section{Additive Energy bounds and end of the proof}
We quote the following result from \cite{BM}, which essentially says that a set of frequencies at which the Fourier coefficients of a set $A$ are large has high additive energy.
\begin{lemma}
\label{ch}
Let $A\subseteq[N]$ be a set of density $\sigma=|A|/N$. Let set $S\subseteq\mathbb{T}$ be a finite set. Then, for every $m\geq 1$,
$$\sum_{\gamma\in S}|\widehat{\mathbf{1}_A}(\gamma)|\ll|A|\sigma^{-1/2m}E_{2m}(S;1/2N)^{1/2m}\text{,}$$
where we define
$$E_{2m}(S;\delta)=|\{(b_1,\ldots,b_{2m})\in S^{2m}:\|b_1+\cdots+b_m-b_{m+1}-\cdots-b_{2m}\|\leq\delta\}|\text{.}$$
\end{lemma}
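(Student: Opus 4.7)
The plan is to write the left-hand side as $\sum_{n\in A}T(n)$ for a short trigonometric polynomial $T$, apply Hölder's inequality to pass to a $2m$-th moment estimate for $T$, and then control this moment using a bandlimited majorant of $\mathbf{1}_{[N]}$ furnished by the Beurling--Selberg construction.

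First, choose unimodular $c_\gamma\in\bC$ satisfying $c_\gamma\widehat{\mathbf{1}_A}(\gamma)=|\widehat{\mathbf{1}_A}(\gamma)|$, and set $T(n):=\sum_{\gamma\in S}c_\gamma e(n\gamma)$. A direct unpacking gives $\sum_{\gamma\in S}|\widehat{\mathbf{1}_A}(\gamma)|=\sum_{n\in A}T(n)$, and Hölder's inequality then yields
$$\Big(\sum_{\gamma\in S}|\widehat{\mathbf{1}_A}(\gamma)|\Big)^{2m}\le|A|^{2m-1}\sum_{n\in A}|T(n)|^{2m}.$$
Thus it suffices to establish the moment bound $\sum_{n\in A}|T(n)|^{2m}\ll N\cdot E_{2m}(S;1/2N)$; substituting and taking a $2m$-th root then produces $|A|\sigma^{-1/2m}E_{2m}(S;1/2N)^{1/2m}$ on account of $N/|A|=\sigma^{-1}$.

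For the moment bound, the key ingredient is a nonnegative function $B\colon\bZ\to\bR_{\ge 0}$ satisfying $B\ge\mathbf{1}_{[N]}$ pointwise, $\sum_n B(n)\ll N$, and whose discrete Fourier transform $\widehat{B}\colon\bT\to\bC$ is supported in $\{\|\gamma\|\le 1/2N\}$. Such a $B$ is classical: one takes a Selberg majorant $M\colon\bR\to\bR_{\ge 0}$ of $\mathbf{1}_{[0,N]}$ of exponential type $\pi/N$ (assembled from the Beurling majorant of the sign function) and restricts it to $\bZ$. By Poisson summation, $\widehat{B}$ is the periodization on $\bT$ of the Fourier transform of $M$ on $\bR$, so it remains supported in $\|\gamma\|\le 1/2N$; and $\|\widehat{B}\|_\infty=\widehat{B}(0)=\sum_n B(n)\ll N$ since $B\ge 0$.

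Using $\mathbf{1}_A\le B$ together with the expansion $|T(n)|^{2m}=\sum_{\mathbf{b},\mathbf{b}'\in S^m}C_{\mathbf{b}}\overline{C_{\mathbf{b}'}}e(n(\sigma_{\mathbf{b}}-\sigma_{\mathbf{b}'}))$, where $C_{\mathbf{b}}:=\prod_i c_{b_i}$ and $\sigma_{\mathbf{b}}:=\sum_i b_i$, swapping orders of summation gives
$$\sum_{n\in A}|T(n)|^{2m}\le\sum_n B(n)|T(n)|^{2m}=\sum_{\mathbf{b},\mathbf{b}'\in S^m}C_{\mathbf{b}}\overline{C_{\mathbf{b}'}}\widehat{B}(\sigma_{\mathbf{b}}-\sigma_{\mathbf{b}'}).$$
The crucial observation is that the summand vanishes unless $\|\sigma_{\mathbf{b}}-\sigma_{\mathbf{b}'}\|\le 1/2N$, and the number of $(\mathbf{b},\mathbf{b}')\in S^m\times S^m$ meeting this constraint is exactly $E_{2m}(S;1/2N)$. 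Since each surviving term has absolute value at most $\|\widehat{B}\|_\infty\ll N$, the triangle inequality yields $\sum_{n\in A}|T(n)|^{2m}\ll N\cdot E_{2m}(S;1/2N)$, as required. The step with the most content is the verification of the existence and properties of the bandlimited majorant $B$; everything else is bookkeeping.
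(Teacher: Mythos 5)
Your proof is correct and follows essentially the same strategy as the paper: Hölder reduces the problem to a $2m$-th moment, and the moment is then bounded by weighting with a nonnegative band-limited function whose Fourier support restriction converts the sum to a count of near-collisions $E_{2m}(S;1/2N)$. The only difference is the choice of weight: the paper uses the sampled Fejér kernel $\psi(n/2N)$ with $\psi(t)=\sin^2(\pi t)/(\pi t)^2$, which is explicit, elementary, and $\gg\mathbf{1}_A$ pointwise up to a constant (all that is needed), whereas you invoke the Beurling--Selberg majorant, which is a genuine pointwise majorant of $\mathbf{1}_{[N]}$ but requires citing a nontrivial construction; both work and give the same bound.
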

\begin{proof}
This is (\cite{BM}, Lemma 7), but we provide a proof for completeness. For each $\gamma$ let $\omega_{\gamma}$ be a complex number with absolute value $1$ for which $\omega_\gamma\widehat{\mathbf{1}_A(\gamma)}=|\widehat{\mathbf{1}_A(\gamma)}|$. By Hölder's inequality,
\begin{align*}\sum_{\gamma\in S}|\widehat{\mathbf{1}_A}(\gamma)|&=\sum_{\gamma in S}\omega_\gamma\sum_{a\in A}e(a\gamma)\\
&\leq |A|^{1-1/2m}\left(\sum_{a\in A}\left|\sum_{\gamma\in S}\omega_\gamma e(a\gamma)\right|^{2m}\right)^{1/2m}\text{.}
\end{align*}
We now consider the function $\psi$ on $\mathbb{R}$ defined by $\psi(t)=\sin(\pi t)^2/(\pi t)^2$, whose Fourier transform $\widehat{\psi}$ satisfies
$$\widehat{\psi}(\xi)=\begin{cases}1-|\xi|&\text{ if }|\xi|\leq 1\\
0&\text{ otherwise. }\\
\end{cases}
$$
Since $\psi$ is non-negative and $\psi(t)\gg 1$ for $t\in[-1/2,1/2]$, it follows that
\begin{align*}
\sum_{a\in A}\left|\sum_{\gamma\in S}\omega_\gamma e(a\gamma)\right|^{2m}&\ll\sum_{n\in\mathbb{Z}}\psi\left(\frac{n}{2N}\right)\left|\sum_{\gamma\in S}\omega_\gamma e(a\gamma)\right|^{2m}\\
&\ll\sum_{(b_1,\ldots,b_{2m})\in S^{2m}}\left|\sum_{n\in\mathbb{Z}}\psi\left(\frac{n}{2N}\right)e\left(n(b_1+\cdots+b_m-b_{m+1}-\cdots-b_{2m})\right)\right|\text{.}
\end{align*}
We now apply the Poisson summation formula (\cite{IK}, Theorem 4.4) to the inner sum, which, together with the fact that $\widehat{\psi}$ has support $[-1,1]$, implies that the above is
\begin{align*}&\ll\sum_{(b_1,\ldots,b_{2m})\in S^{2m}}N\left|\sum_{h\in\mathbb{Z}}\widehat{\psi}\left(2N(b_1+\cdots+b_m-b_{m+1}-\cdots-b_{2m})\right)\right|\\
&\ll N|\{(b_1,\ldots,b_{2m})\in S^{2m}:\|b_1+\cdots+b_m-b_{m+1}-\cdots-b_{2m}\|\leq1/2N\}|\text{.}
\end{align*}
Substituting this into our application of Hölder finishes the proof.
\end{proof}
Lemma \ref{ch} runs into some tension against the following additive energy bound of Bloom and Maynard if the large Fourier coefficients occur at rationals with different denominators, which we know will be the case in the absence of a good density increment, according to Lemma \ref{cor0}.
\begin{lemma}
\label{newbm}
Let $Q\geq 4$ and $m\geq2$. Let $S$ be a set of positive rational frequencies in $\mathbb{T}$ with denominator at most $Q$ contained in an interval of length $1/8m$ and assume that $S$ contains at most $n$ frequencies with denominator $q$ for every $q\leq Q$. Then we have
$$E_{2m}(S;0)\leq(\log Q)^{C^m}(Qn)^m$$
for some absolute constant $C$.
\end{lemma}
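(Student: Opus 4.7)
Plan: The first and cleanest simplification is that the defining condition of $E_{2m}(S;0)$ reduces to an exact rational identity. Indeed, since $S$ lies in an interval of length $1/8m$, any sum $b_1+\cdots+b_m$ of $m$ elements of $S$ lies in an interval of length at most $1/8$, and so the difference $(b_1+\cdots+b_m)-(b_{m+1}+\cdots+b_{2m})$ lies in $(-1/4,1/4)$. Consequently the condition $\|b_1+\cdots+b_m-b_{m+1}-\cdots-b_{2m}\|=0$ is equivalent to the honest equality $b_1+\cdots+b_m=b_{m+1}+\cdots+b_{2m}$ as rational numbers. Thus $E_{2m}(S;0)$ counts integer solutions to a single linear Diophantine equation in the numerators.

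I would then proceed by induction on $m$. The base case $m=1$ gives the trivial identity $E_2(S;0)=|S|\leq Qn$, which is implied by the stated bound. For the inductive step, partition $S=\bigsqcup_{q\leq Q}S_q$ according to denominator in lowest terms (so $|S_q|\leq n$), and stratify the $2m$-tuples by the ordered sequence of denominators $(q_1,\ldots,q_{2m})$. For each such sequence, clearing denominators converts the linear relation into a Diophantine equation in the numerators $a_i\in[1,q_i]\cap(\mathbb{Z}/q_i\mathbb{Z})^\times$. Pivoting on one numerator (say the one attached to the largest denominator) and using the classical divisor bound $\tau(N)\ll(\log N)^{O(1)}$ with $N\leq Q^{2m}$, the number of configurations compatible with a fixed ``pivot'' is controlled by a divisor sum; a Cauchy--Schwarz (or Hölder) step then converts the resulting mixed energy into a lower-order pure energy $E_{2m-2}(S;0)$ to which the inductive hypothesis applies.

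Iterating this recursion from the base case produces the claimed bound $E_{2m}(S;0)\leq(\log Q)^{C^m}(Qn)^m$: the logarithmic factor compounds multiplicatively under the recursion (each inductive level multiplies the current log exponent by a bounded constant $C$), yielding the superexponential $C^m$, while the $(Qn)^m$ factor arises as the product of the $m$ ``free'' choices that remain after the single linear constraint removes one degree of freedom per pivoting step.

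The main obstacle is precisely calibrating the Cauchy--Schwarz step so that the logarithmic loss and the combinatorial saving balance. A naive application either loses an extra power of $Qn$ (which would be fatal in subsequent applications of the lemma) or a power of $Q^\varepsilon$ (which the argument cannot afford). The correct bookkeeping requires exploiting the fact that, once a pivot is fixed, the Diophantine equation has essentially only $O(\tau(N))$ solutions in the pivot variable when the others are set; it is here that the specific structure of the single equation, together with the interval constraint forcing exact rather than mod-$1$ equality, is decisive.
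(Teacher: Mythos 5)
Your first step coincides with the paper's: because $S$ sits in an interval of length $1/8m$, the two $m$-fold sums differ by a real number in $[-1/4,1/4]$, so ``$\equiv 0\pmod 1$'' is genuine equality in $\mathbb{Q}$. The paper stops there and simply cites Theorem~2 of \cite{BM}, which is exactly the asserted energy bound for a set of rationals with bounded denominators and bounded multiplicity per denominator; the lemma is a one-line translation of that theorem from $\mathbb{R}$ into $\mathbb{T}$ via the small-interval trick.

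What you do instead is attempt to reprove the cited Bloom--Maynard theorem, and here there is a genuine gap: the inductive step is described but never executed. You say that ``a Cauchy--Schwarz (or H\"older) step then converts the resulting mixed energy into a lower-order pure energy $E_{2m-2}(S;0)$'' without writing down the inequality, and your closing paragraph explicitly flags, without resolving, the decisive difficulty --- that a naive application loses a factor of $Qn$ or of $Q^{\varepsilon}$. That calibration is precisely the nontrivial content of Theorem~2 of \cite{BM}, so presenting it as an ``obstacle'' rather than a solved step means the proposal does not contain a proof. A further unexplained point: the shape $(\log Q)^{C^m}$ requires the logarithmic exponent to \emph{multiply} by a constant at each of the $m$ recursion levels. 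A straightforward Cauchy--Schwarz-with-divisor-bound induction would naturally make the log exponent grow \emph{additively}, giving $(\log Q)^{O(m)}$, and you do not identify the mechanism that produces multiplicative compounding. As written, the proposal is a plausible high-level outline of the \cite{BM} strategy with the hard step left open; the correct and economical route, which the paper takes, is to perform the interval reduction and then cite Theorem~2 of \cite{BM} directly.
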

\begin{proof}
Let $S'$ be a set of representatives on the real line of the elements of $S$, contained in an interval of length $1/8m$. The number $E_{2m}(S;0)$ counts $2m$-tuples $(b_1,\ldots,b_{2m})\in S'^{2m}$ for which
$$b_1+\cdots+b_m-b_{m+1}-\cdots-b_{2m}\in\mathbb{Z}\text{.}$$
But the above expression lies in $[-1/4,1/4]$, so if it is an integer then it must equal $0$. Therefore the result follows from Theorem 2 of \cite{BM}.
\end{proof}

We are now ready to state and prove the next result which essentially determines the quality of the density increment that the previous additive energy bounds allow us to obtain. It is similar in spirit to Lemma 8 of \cite{BM}, with the differences arising from our slightly worse bound for the sum in Lemma \ref{rdi}(i) (Bloom and Maynard manage to avoid the power of $\log N$ in the denominator). However this slight worsening of the quality of the density increment, as measured by the parameter $\nu$, turns out not to be detrimental to the quality of the final bound.

\begin{lemma}
\label{finpiece}
Let $N\gg_h 1$, and suppose as usual that $d\leq N^\rho$. Then, for some absolute constant $c>0$, if we set
$$\nu=\frac{1}{(\log N)^{1/2}}\exp\left(-c\frac{\log(1/\sigma)}{\log\log(1/\sigma)}\right)\text{,}$$
then there exist $N^\ast\gg\sigma^{O(1)}N/(\log N)^{1/2}$, an integer $q\ll\sigma^{-O(1)}$ and a $h_{qd}$-free subset $A^\ast\subseteq[N^\ast]$ with density
$$\sigma^\ast\geq\left(1+\frac{\nu}{73}\right)\sigma\text{.}$$
\end{lemma}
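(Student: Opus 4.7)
The plan is to apply Lemma \ref{cor0} with the value of $\nu$ as in the statement, and show that its conclusion (ii) is untenable, so that (i) must hold. Two sanity checks first: the running assumption $\sigma \ge (\log N)^{-c\log\log\log N}$ gives $\log(1/\sigma)/\log\log(1/\sigma) \ll \log\log N$, so $\nu \gg (\log N)^{-O(1)} \gg N^{-1/2}$; and since $\exp(-c\log(1/\sigma)/\log\log(1/\sigma)) \ge \sigma^{c/\log\log(1/\sigma)} \gg \sigma$ for $\sigma$ small, conclusion (i) of Lemma \ref{cor0} produces an $N^\ast \gg \nu\sigma^{O(1)}N \gg \sigma^{O(1)}N/(\log N)^{1/2}$, which is exactly what the Lemma claims.

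So assume conclusion (ii): we have parameters $B \ll \sigma^{-O(1)}$, $Q \ll \sigma^{-O(1)}$, frequencies $\gamma_{a,q} \in \fM_{a,q}(N,\kappa/\sigma)$ and a set $\Gamma$ with $|\{a : (a,q)\in\Gamma\}| \ll \nu B^2$ for every $q \le Q$, together with the lower bound $\sum_{(a,q)\in\Gamma}|\widehat{\mathbf{1}_A}(\gamma_{a,q})| \gg B|A|Q^{1/2}/((\log N)^{1/4}(\log(1/\sigma))^2)$ from Lemma \ref{rdi}(i). Set $S = \{\gamma_{a,q}\}$, $S' = \{a/q\}$, and fix $m = \lfloor c_0\log\log(1/\sigma)\rfloor$, where $c_0 > 0$ will be chosen small. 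Partition $S$ into $O(m)$ pieces $S_j$, each contained in an interval of length $\le 1/(8m)$, and apply Lemma \ref{ch} on each piece to obtain the upper bound $\sum_{\gamma \in S}|\widehat{\mathbf{1}_A}(\gamma)| \ll m|A|\sigma^{-1/2m}\max_j E_{2m}(S_j;1/2N)^{1/2m}$.

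The key intermediate step, and what I would expect to be the main technical obstacle, is transferring the approximate energy $E_{2m}(S_j;1/2N)$ to the exact energy $E_{2m}(S_j';0)$ to which Lemma \ref{newbm} applies. Writing $\gamma_{a,q} = a/q + \beta_{a,q}$ with $|\beta_{a,q}| \le \kappa/(\sigma N)$, the condition $\|\sum\pm\gamma\| \le 1/(2N)$ implies $\|\sum\pm a/q\| \le 1/(2N) + 2m\kappa/(\sigma N)$; but the latter has denominator at most $Q^{2m}$, and since $Q \ll \sigma^{-O(1)}$, $m \asymp \log\log(1/\sigma)$ and $\sigma$ is only slightly subpolynomial in $\log N$, the product $Q^{2m}(1/(2N) + 2m\kappa/(\sigma N))$ is $o(1)$, which forces the sum $\sum\pm a/q$ to be an integer. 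Hence $E_{2m}(S_j;1/2N) \le E_{2m}(S_j';0) \le (\log Q)^{C^m}(Q\nu B^2)^m$ by Lemma \ref{newbm}.

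Combining this upper bound with the lower bound from Lemma \ref{rdi}(i) and simplifying yields
\[
\nu \gg \frac{\sigma^{1/m}}{m^2(\log N)^{1/2}(\log(1/\sigma))^4(\log Q)^{C^m/m}}.
\]
Finally, choosing $c_0$ small enough that $c_0\log C < 1$ makes $C^m \le (\log(1/\sigma))^{c_0\log C}$ a strictly sub-linear power of $\log(1/\sigma)$, so $(\log Q)^{C^m/m} = \exp(o(\log(1/\sigma)/\log\log(1/\sigma)))$; meanwhile $\sigma^{1/m} = \exp(-c_0^{-1}\log(1/\sigma)/\log\log(1/\sigma))$ dominates, and the polynomial factors $m^2$ and $(\log(1/\sigma))^4$ are negligible on the same exponential scale. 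This forces $\nu \gg (\log N)^{-1/2}\exp(-c\log(1/\sigma)/\log\log(1/\sigma))$ for a suitable constant $c > 0$, contradicting our choice of $\nu$. Hence (i) of Lemma \ref{cor0} must hold, completing the proof.
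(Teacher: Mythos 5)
Your proposal is correct and follows essentially the same route as the paper: apply Lemma \ref{cor0}, rule out case (ii) by combining Lemma \ref{rdi}(i) with Lemma \ref{ch}, transfer the approximate additive energy $E_{2m}(S;1/2N)$ to the exact rational energy $E_{2m}(\cdot;0)$ via the small-denominator argument, invoke Lemma \ref{newbm} with $n\ll\nu B^2$, and optimize $m\asymp\log\log(1/\sigma)$. The only cosmetic differences are that you spell out the two sanity checks ($\nu\geq N^{-1/2}$ and the translation of $\nu\sigma^{O(1)}N$ into $\sigma^{O(1)}N/(\log N)^{1/2}$) explicitly, and you partition $S$ into $O(m)$ short subintervals and take a maximum rather than pigeonholing down to one dense subinterval of length $1/8m$ as the paper does; these are equivalent up to constants and lose the same factor of $m$.
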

\begin{proof}
Let $m\geq2$ be an integer to be chosen later. We apply Lemma \ref{cor0}; our goal is to prove that case (i) must hold for the presented choice of $\nu$. Assume it does not; observe that, by an application of the Pigeonhole Principle, there exists a subset $\Gamma'\subseteq\Gamma$ such that the frequencies $\gamma_{a,q}$ for $(a,q)\in\Gamma'$ lie on an interval of length $1/8m$ and such that
$$\sum_{(a,q)\in\Gamma}|\widehat{\mathbf{1}_A}(\gamma_{a,q})|\gg\frac{B|A|Q^{1/2}}{m(\log N)^{1/4}(\log(1/\sigma))^2}\text{.}$$
Let $S=\{\gamma_{a,q}:(a,q)\in\Gamma'\}$ and let $T=\{a/q:(a,q)\in\Gamma\}$. It follows from Lemma \ref{ch} that
$$\frac{B|A|Q^{1/2}}{m(\log N)^{1/4}(\log(1/\sigma))^2}\ll|A|\sigma^{-1/2m}E_{2m}(S;1/2N)^{1/2m}\text{.}$$
Suppose $(a_1,q_1),\ldots,(a_{2m},q_{2m})$ are such that
\begin{equation}
\label{adencon}
\|\gamma_{a_1,q_1}+\cdots+\gamma_{a_m,q_m}-\gamma_{a_{m+1},q_{m+1}}-\cdots-\gamma_{a_{2m},q_{2m}}\|\leq\frac{1}{2N}\text{.}
\end{equation}
Since $\gamma_{a,q}$ differs from $a/q$ by at most $\kappa/\sigma N$, it follows that
$$\left\|\frac{a_1}{q_1}+\cdots+\frac{a_m}{q_m}-\frac{a_{m+1}}{q_{m+1}}-\cdots-\frac{a_{2m}}{q_{2m}}\right\|\leq\frac{1}{2N}+\frac{\kappa m}{\sigma N}\ll\frac{m}{\sigma N}\text{.}$$
On the other hand, the sum above, if nonzero, has denominator at most $Q^{2m}$. Since $Q\ll\sigma^{-O(1)}$, it would follow that $\sigma N\ll m\sigma^{-O(m)}$; but, as we will see, our final choice of $m$ will be such that $m\ll\log\log(1/\sigma)$, so that these would contradict our standing assumptions on the relative sizes of $N$ and $\sigma$. It follows that, whenever \eqref{adencon} holds, we have
$$\frac{a_1}{q_1}+\cdots+\frac{a_m}{q_m}-\frac{a_{m+1}}{q_{m+1}}-\cdots-\frac{a_{2m}}{q_{2m}}=0\text{.}$$
Therefore $E_{2m}(S;1/2N)\leq E_{2m}(T;0)$, and it follows that
$$\frac{B|A|Q^{1/2}}{m(\log N)^{1/4}(\log(1/\sigma))^2}\ll|A|\sigma^{-1/2m}E_{2m}(T;0)^{1/2m}\text{.}$$
We now apply Lemma \ref{newbm} to bound the additive energy, noting that according to the conclusion of Lemma \ref{cor0} we can take $n\ll\nu B^2$. It follows that
$$\frac{B|A|Q^{1/2}}{m(\log N)^{1/4}(\log(1/\sigma))^2}\ll|A|\sigma^{-1/2m}E_{2m}(T;0)^{1/2m}1\leq |A|\sigma^{-1/2m}(\log Q)^{C^m/2m}\nu^{1/2}Q^{1/2}B\text{,}$$
and now - and this is the key reason why we needed to obtain square root cancellation in our exponential sums! - we can cancel the powers of $Q$ on both sides (as well as some other factors) and we obtain
$$\nu\gg\frac{\sigma^{1/m}}{(\log N)^{1/2}m^2\log(1/\sigma)^4(\log Q)^{C^m/m}}\text{.}$$
Recalling that $Q\ll\sigma^{-O(1)}$, we see that, if $c'>0$ is sufficiently small, then setting $m=\lceil c'\log\log(1/\sigma)\rceil$, this yields
$$\nu\gg\frac{1}{(\log N)^{1/2}}\exp\left(-O\left(\frac{\log(1/\sigma)}{\log\log(1/\sigma)}\right)\right)\text{.}$$
This means that case (ii) in Lemma \ref{cor0} is not compatible with the given choice of $\nu$ if $c$ is sufficiently small, so that case (i) must hold, which is precisely what we want.
\end{proof}

We now apply an iterative density increment argument to deduce our main result, Theorem \ref{main}.

\begin{proof}
Assume that $A\subseteq[N]$ is $h$-free with density $\sigma=|A|/N$. We may as well assume that $N\gg_h1$. We set
$$\nu=\frac{1}{(\log N)^{1/2}}\exp\left(-c\frac{\log(1/\sigma)}{\log\log(1/\sigma)}\right)\text{,}$$
as given by Lemma \ref{finpiece}. Applying Lemma \ref{finpiece} iteratively $t$ times produces:
\begin{itemize}
\item A sequence of positive integers $N_0,N_1,\ldots,N_t$ with $N_0=N$ and $N_{i+1}\geq\sigma^{O(1)}N_i/(\log N)^{1/2}$;
\item A sequence of positive integers $d_0,\ldots,d_t$ with $d_0=1$ and $d_{i+1}\leq\sigma^{-O(1)}d_i$;
\item A sequence of subsets $A_i\subseteq[N_i]$ such that $A_i$ is $h_{d_i}$-free and has density $\sigma_i$, with
$$\sigma_{i+1}\geq\left(1+\frac{\nu}{73}\right)\sigma_i\text{.}$$
\end{itemize}
We perform this iteration until we find $t$ such that $N_t<N^{1/2}$. Until then the conditions of Lemma \ref{finpiece} remain in place; for expository purposes we will defer the verification that the technical condition $d_t\leq N^{\rho}$ remains true when that holds to the end of the proof.

We now observe that
$$1\geq\sigma_t\geq\left(1+\frac{\nu}{73}\right)^t\sigma\text{,}$$
whence
\begin{equation}
\label{tb}
t\leq\frac{\log(1/\sigma)}{\log(1+\nu/73)}\ll\nu^{-1}\log(1/\sigma)\text{.}
\end{equation}
On the other hand, one has
$$N^{1/2}>N_t\geq\frac{\sigma^{O(t)}}{(\log N)^{t/2}}N\text{,}$$
which implies, after canceling $N^{1/2}$ and taking logarithms,
$$\log N\ll t\log(1/\sigma)+t\log\log N\text{.}$$
Now using the bound for $t$ given by \eqref{tb}, it follows that
$$\log N\ll\nu^{-1}\log(1/\sigma)^2+\nu^{-1}\log(1/\sigma)\log\log N\text{,}$$
whence
\begin{equation}
\label{spl}
\log N\ll\nu^{-1}\log(1/\sigma)^2\text{ or }\log N\ll\nu^{-1}\log(1/\sigma)\log\log N\text{.}
\end{equation}

If the former holds, then unraveling the definition of $\nu$ yields
$$(\log N)^{1/2}\ll\exp\left(c\frac{\log(1/\sigma)}{\log\log(1/\sigma)}\right)\log(1/\sigma)^2$$
and taking logarithms from both sides we obtain
$$\log\log N\ll\frac{\log(1/\sigma)}{\log\log(1/\sigma)}+\log\log(1/\sigma)\ll\frac{\log(1/\sigma)}{\log\log(1/\sigma)}\text{.}$$
The above is easily seen to imply that $\log(1/\sigma)\gg\log\log N\cdot\log\log\log N$, which is a restatement of the desired result.

If, on the other hand, the latter holds in \eqref{spl}, then similarly unraveling the definition of $\nu$ yields
$$(\log N)^{1/2}\ll\exp\left(c\frac{\log(1/\sigma)}{\log\log(1/\sigma)}\right)\log(1/\sigma)\log\log N\text{,}$$
and taking logarithms gives
$$\log\log N\ll\frac{\log(1/\sigma)}{\log\log(1/\sigma)}+\log\log(1/\sigma)+\log\log\log N\text{,}$$
and since $\log\log\log N=o(\log\log N)$, it follows that
$$\log\log N\ll\frac{\log(1/\sigma)}{\log\log(1/\sigma)}\text{,}$$
which, as we have seen, implies the result.

It remains only to check that $d_t\leq N^{\rho}$, which it suffices to do under the assumption that $\sigma\geq (\log N)^{-\log\log\log N/4c}$, where $c$ is the constant appearing in the statement of Lemma \ref{finpiece}. Under that assumption, it is easily seen that
$$\frac{\log(1/\sigma)}{\log\log(1/\sigma)}\leq\frac{1}{4c}\log\log N$$
(if $N$ is large enough). Now observe that $d_t\leq\sigma^{-O(t)}$, and
\begin{align*}t&\leq(\log N)^{1/2}\exp\left(c\frac{\log(1/\sigma)}{\log\log(1/\sigma)}\right)\log(1/\sigma)\\
&\leq(\log N)^{1/2}\exp\left(\frac{1}{4}\log\log N\right)\log(1/\sigma)\\
&=(\log N)^{3/4}\log(1/\sigma)\\
&\ll(\log N)^{3/4}\log\log N\log\log\log N\text{.}
\end{align*}
This implies that
$$d_t\leq(\log N)^{O((\log N)^{3/4}\log\log N(\log\log\log N)^2)}=\exp(O((\log N)^{3/4}(\log\log N)^2(\log\log\log N)^2))\text{,}$$
and the above is smaller than $N^{\rho}$ if $N$ is sufficiently large.
\end{proof}

\section{Acknowledgements}
The author was funded through the Engineering
and Physical Sciences Research Council Doctoral Training Partnership at the
University of Warwick. Special thanks are due to Sam Chow for bringing this problem to the attention of the author.

\providecommand{\bysame}{\leavevmode\hbox to3em{\hrulefill}\thinspace}

\end{document}